\crefname{thm}{Thm.}{}
\crefname{prop}{Prop.}{}
\crefname{lem}{Lem.}{}
\crefname{cor}{Cor.}{}
\crefname{prob}{Problem}{}
\crefname{figure}{Fig.}{}
 \newcommand{\I}{{\rm{i}}}
 \newcommand{\Kb}{{{\bar K}}}
 \newcommand{\Z}{{\mathbb Z}}
 \newcommand{\Q}{{\mathbb Q}}
 \newcommand{\Pp}{{\mathbb P}}
 \newcommand{\CC}{{\mathbb C}}
 \newcommand{\Cc}{{\mathcal C}}
 \newcommand{\Ee}{{\mathcal E}}
 \newcommand{\Fc}{{\mathcal F}}
  \newcommand{\Hc}{{\mathcal H}}
 \newcommand{\Ll}{{\mathcal L}}
 \newcommand{\KK}{{\mathcal K}}
 \newcommand{\M}{{\mathcal M}}
 \newcommand{\OO}{{\mathcal O}}
 \newcommand{\Sc}{{\mathcal S}}
 \newcommand{\Xc}{{\mathcal X}}
 \newcommand{\aut}{{\text{Aut}}}
 \newcommand{\rk}{{\text{rk}}}
 \newtheorem{thm}{Theorem}[section]
 \numberwithin{equation}{section}
\newcommand\iso{\cong}
\def\det{\mbox{det }}
\def\mod{\mbox{ mod }}
\def\deg{\mbox{deg }}
\newcommand{\bigzero}{\mbox{\normalfont\Large\bfseries 0}}
\begin{document}

\title{The splitting fields and  Generators of  Shioda's elliptic surfaces
	 $y^2=x^3 +t^{m} +1$ (I)}
\author{Sajad Salami}
\address{Institute of Mathematics and Statistics, Rio de Janeiro State University, Rio de Janeiro, RJ, Brazil}
\email{sajad.salami@ime.uerj.br}

	\author{Arman Shamsi Zargar}
\address{Department of Mathematics and Applications, University of Mohaghegh Ardabili, Ardabil, Iran}
\email{zargar@uma.ac.ir}
\date{}

\begin{abstract}

The splitting field of an elliptic surface $\mathcal E$ defined over ${\mathbb Q}(t)$ is the smallest subfield  $\mathcal K$ of 
$\mathbb C$ such that ${\mathcal E}({\mathbb C}(t))\cong {\mathcal E}({\mathcal K}(t))$.	
In this paper, we    determine the splitting field ${\mathcal K}_m$
and a set of linearly independent generators for the Mordell--Weil lattice of the Shioda's elliptic surface  with generic fiber given by  
${\mathcal E}_m: y^2=x^3 +t^{m} +1$ over ${\mathbb Q}(t)$  for positive integers $1\leq m\leq 12$.

\bigskip

\noindent 	{\bf Keywords}: Elliptic surface, Mordell--Weil lattice, Splitting  field  

\bigskip

\noindent	{\bf AMS subjects (2020)}: 14J27, 11G05.

\end{abstract}

\maketitle

\section{Introduction and main results}


In \cite{Shioda1999a}, T.~Shioda stated that for the elliptic surface  with the generic fiber  
defined by the Weierstrass equation
$y^2 =x^3 + t^m +a$, where $m\geq 1$ and $a \in \Q^*$,  the splitting field $\KK |\Q$
is a cyclic extension of a cyclotomic field and probably the minimal vectors of its 
Mordell--Weil lattice give rise  to a subgroup of finite index in the unit group of $\KK$. 
As a concrete example, he determined the splitting field in 
  the case $m=6$ and $a=-1$ without giving any explicit set of generators of its Mordell--Weil lattice.
In \cite{Shioda1999}, T.~Shioda considered certain elliptic surfaces over  $\Q(t)$ with Mordell--Weil lattices 
of type $E_6, E_7$ and $E_8$.
 
In this paper, we  consider the  elliptic surface with generic fiber given by
\begin{equation}
	\label{shi-eq1}
	\Ee_m : y^2=x^3 + t^{m} +1,
\end{equation}
defined over $\Q(t)$.  It is stated by Shioda in \cite{Shioda1992a} and is proved
by H.~Usui  \cite{Usui2000, Usui2001, Usui2006, Usui2008}
  that the  rank of $\Ee_m$ varies from $0$ to $68$ over $\CC(t)$ for  $u \leq m\leq 360 u$ and any
 integer $u\geq 1$. Particularly,   the fibers $\Ee_m$  with  $m=360 u$  have  rank  $68$.
Let $r_m$ denotes the  rank of   Mordell--Weil lattice $\Ll_m=\Ee_m(\CC(t))$. 

	According to the  Table \ref{tabusui}, due to Usui, one can recognize that to determine the linearly independent  generators and the splitting field of the latices $\Ll_m$, it necessary to determine for cases $m=2,3, 4, 5, 6, 9, $ and $12$. The  other cases depend on these and lattice of some K3  surfaces treated by authors in  \cite{Salami2022}, see subsection \ref{Num-res}.

The main goals of the current paper are  to     determine the splitting fields $\KK_m$
 and to provide a set of $r_m$ independent generators of  $\Ee_m(\KK_m(t))$  for   $2\leq m\leq 12$.


	\begin{thm}
	\label{main-a}
	\begin{itemize}
		\item[(i)] For $m=1, 7  $,  the lattices  $\Ll_1\cong\Ll_7$ are   trivial and hence   $r_1=r_7=0$;
		\item[(ii)] For $m=2$,   one has  $ \Ll_2\cong \Ee_2(\KK_2(t))$  with rank $r_2=2$, where	$\KK_2 =\Q(\zeta_3)$ with $\zeta_3 =(\mathrm{i} \sqrt{3} -1)/2$,  defined by a minimal polynomial $g_2(x)=x^2+x+1$.
		A set of   independent generators of $\Ee_2(\KK_2(t))$ contains the following two points: 
$$P_1=(-1, t), \ \  P_2= \left( -\zeta_3 \, ( 1+\frac{ 4}{3} t^2), 
 \I \sqrt{3} \, t ( 1 + \frac{8}{9} t^2) \right).$$
%

	\item[(iii)]	For $m=3$,   one has  $ \Ll_3\cong \Ee_3(\KK_3(t))$   with rank $r_3=4$ 
		where	$\KK_3 =\Q(\zeta_{3}, 2^{\frac{1}{3}} )$, defined by a minimal polynomial $g_3(x)=x^6 - 3x^5 + 5x^3 - 3x + 1.$
		Moreover, a set of independent generators includes the following   four points 
	$$
	\begin{aligned}
		P_1 &=\left( -\zeta_3^2 \, t, 1 \right), &		
		P_3 &=\left( -( t +  2^{\frac{2}{3}}) ,  -(2\zeta_3+1)
		(2^{\frac{1}{3}} t +1) \right),\\
	 P_2 &=\left(-t, 1 \right), &
	 P_4  &=  \left(-(t +   \zeta_3^2\, 2^{\frac{2}{3}}), 
	 (\zeta_3^2-1)\,2^{\frac{2}{3}} t + 2 \zeta_{3}  +1\right). 
	 \end{aligned}
	 $$
	 	\end{itemize}
\end{thm}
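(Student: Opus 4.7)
The plan is to split the theorem into three separable tasks: (a) confirming the rank values $r_m$, (b) verifying that each listed section lies on $\Ee_m$, and (c) proving linear independence and identifying $\KK_m$ as the minimal field of definition. The rank data in (a) is already furnished by Usui's computation, recorded in Table~\ref{tabusui}, so essentially only (b) and (c) require work, and only for $m=2,3$. Part (i) then requires no further argument: a rank-zero lattice is unique up to isomorphism, so $\Ll_1 \cong \Ll_7 \cong \{O\}$ follows immediately from the stated ranks.

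For part (ii), the first step is to substitute each $P_i$ into $y^2 = x^3 + t^2 + 1$ and confirm the identity as polynomials in $t$. The check for $P_1 = (-1,t)$ is immediate. For $P_2$, setting $\alpha := 1 + \tfrac{4}{3}t^2$ one has $x^3 = -\alpha^3$ (using $\zeta_3^3 = 1$) and
\begin{equation*}
-\alpha^3 + t^2 + 1 \;=\; -3 t^2 \bigl( 1 + \tfrac{8}{9} t^2 \bigr)^2 \;=\; y^2,
\end{equation*}
since $(\I\sqrt{3})^2 = -3$. To show that $P_1,P_2$ span a rank-$2$ sublattice, I would compute the Shioda height pairing. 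The surface $\Ee_2$ has constant $j$-invariant $0$, its minimal model has only additive fibers, and their Kodaira types at the roots of $t^2+1$ and at $t=\infty$ can be read off from the valuations of $g_3 = t^2+1$ and $\Delta = -432(t^2+1)^2$. The local correction terms are then standard, and the resulting $2\times 2$ Gram matrix should be positive definite with determinant matching the predicted discriminant of $\Ll_2$, so that $\{P_1,P_2\}$ is a genuine $\Z$-basis.

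For part (iii) the same scheme applies to the four candidates: verify the four polynomial identities (using $\zeta_3^2 + \zeta_3 + 1 = 0$ and $(2^{1/3})^3 = 2$ repeatedly), then assemble the $4\times 4$ height Gram matrix and compare its determinant with the discriminant of $\Ll_3$ predicted by the Shioda--Tate formula; agreement rules out any finite-index gap. Finally, to identify $\KK_m$, I observe that every coordinate of the listed sections lies in the named field, and conversely the $\gal(\bar\Q/\Q)$-orbit of any proposed generating set must be realized simultaneously inside $\KK_m$ -- a proper subfield would omit a Galois conjugate from the $\Z$-span and thus fall short of rank $r_m$. The minimal polynomial $g_m(x)$ is then extracted as the defining polynomial of a primitive element of $\KK_m$ over $\Q$.

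The main obstacle I anticipate is the last step: showing that the displayed sections generate $\Ll_m$ itself rather than merely a finite-index sublattice, together with the minimality of $\KK_m$. Both hinge on the precise discriminant of the height-pairing Gram matrix matching the Shioda--Tate determinant, and on a Galois-equivariance check establishing that the $\Z$-span of the listed sections is stable under $\gal(\KK_m/\Q)$ but collapses in rank upon passing to any proper subfield.
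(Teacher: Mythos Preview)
Your plan is correct and, on the Gram-matrix side, matches what the paper does: for each $m$ the paper writes down the height-pairing matrix of the listed sections ($\M_2=\tfrac13\bigl(\begin{smallmatrix}2&-1\\-1&2\end{smallmatrix}\bigr)$ for $m=2$, and an explicit $4\times4$ matrix $\M_3$ of determinant $1/4$ for $m=3$) and compares with the lattice discriminant $d_m$ in Table~\ref{tabusui} to conclude that the sections form a $\Z$-basis rather than a finite-index sublattice.

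Where you diverge from the paper is in the determination of $\KK_m$ and the origin of the points. For $m=3$ the paper does not take the four sections as given and then argue minimality of $\KK_3$ by Galois descent as you propose; instead it passes to the shifted model $\Ee'_3: y^2=x^3+t(t^2-3t+3)$, writes a generic minimal section as $(at+b,\,ct+d)$, specializes at the additive fiber $t=0$ via $u=b/d$, and obtains an explicit fundamental polynomial $\Phi_3(u)$ of degree $24$. By Theorem~\ref{phi} the splitting field of $\Phi_3$ is exactly $\KK_3$, and the paper then factors $\Phi_3$ over $\Q(\zeta_3,2^{1/3})$ and reads off the four sections from four specific roots $u_1,\ldots,u_4$. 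So the paper's argument is constructive (it simultaneously produces the points and identifies $\KK_m$), while yours is verificational (it checks that given points work and then argues field-minimality after the fact).

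Both routes are valid. The fundamental-polynomial approach scales to the higher-$m$ cases treated later in the paper and explains where the sections come from; your approach is quicker once the sections are already on the table, but the minimality step you correctly flag as delicate needs the observation that the coordinates of $P_1,\ldots,P_4$ not only lie in $\KK_3$ but \emph{generate} it over $\Q$ (here $\zeta_3^2$ from $P_1$ and $2^{2/3}$ from $P_3$ suffice), so that any $\sigma\in\gal(\bar\Q/\Q)$ fixing every Mordell--Weil section must fix $\KK_3$ pointwise.
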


We   fix the  $12$-th root of unity $ 	\zeta_{12} = (\mathrm{i}+ \sqrt {3})/2$. 
 Then, for $m=4$, we have: 

	\begin{thm}
	\label{main-c}		
	For $m=4$,   the isomorphic lattices $ \Ll_4\cong \Ee_4(\KK_4(t))$ 
	has rank $r_4=6$, 	and  $\KK_4 $ is  defined by a minimal polynomial $g_4(x)$ of degree 16 given by \ref{g4}, which includes
		$\Q (\zeta_{12},  \alpha_1),\  \text{with}\  \alpha_1=  2^{\frac{1}{4}} 3^{\frac{1}{8}} (\sqrt{3}-1)^{\frac{1}{2}}.$
    Moreover, 	a set of independent generators for $\Ee_4(\KK_4(t))$
includes  points 
 $P_j=(a_j\, t+ b_j, t^2+ c_j\,t +d_j )$ for $j=1,\cdots, 6$, given by \ref{points4}.
\end{thm}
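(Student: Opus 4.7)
\textbf{Proof plan for Theorem \ref{main-c}.}
The rank $r_4=6$ is already provided by Usui's tabulation cited in the introduction; a quick fiber analysis shows $\Ee_4$ is a rational elliptic surface with a single Kodaira fiber of type $IV$ over $t=\infty$ and four cuspidal fibers of type $II$ over the roots of $t^4+1$. The task therefore reduces to producing six linearly independent sections of $\Ee_4$ and identifying the smallest field $\KK_4\subset\CC$ over which they are defined.

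By the Shioda height formula, a section of smallest positive height on $\Ee_4$ must have its $x$-coordinate linear and its $y$-coordinate monic quadratic in $t$. I would therefore start with the ansatz
\[
P=\bigl(a\,t+b,\; t^2+c\,t+d\bigr),
\]
substitute into $y^2=x^3+t^4+1$, and equate coefficients of $t^0,\dots,t^3$ (the $t^4$ coefficient being automatic). This yields
\begin{align*}
2c &= a^3, & c^2+2d &= 3a^2b,\\
2cd &= 3ab^2, & d^2 &= b^3+1.
\end{align*}

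Setting $c=a^3/2$ and $d=3b^2/a^2$ collapses the first three equations to $a^8-12a^4b+24b^2=0$, giving $b=a^4(3\pm\sqrt{3})/12$ and $d=a^6(2\pm\sqrt{3})/8$. Feeding these into $d^2=b^3+1$ produces a single polynomial equation for $a^{12}$, and using the identity $(45+26\sqrt{3})(45-26\sqrt{3})=-3$ one finds $a^{12}=\alpha_1^{12}$ with $\alpha_1=2^{1/4}3^{1/8}(\sqrt{3}-1)^{1/2}$. Hence $a$ ranges over $\zeta_{12}^k\alpha_1$, and the ansatz produces exactly $24$ sections, all defined over $\Q(\zeta_{12},\alpha_1)$. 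Computing the minimal polynomial of a primitive element of this compositum over $\Q$ then yields the degree-$16$ polynomial $g_4(x)$ displayed in \ref{g4}, so that $\KK_4=\Q(\zeta_{12},\alpha_1)$.

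The principal obstacle is the final step: proving linear independence of the six sections $P_1,\dots,P_6$ from \ref{points4} in $\Ee_4(\KK_4(t))$. For this I would assemble the $6\times 6$ Gram matrix of the Shioda canonical height pairing. The only nontrivial local correction comes from the type-$IV$ fiber at $t=\infty$ (the type-$II$ fibers contribute nothing), so one must determine the component of that fiber hit by each $P_j$ together with its intersection number with the zero section. Checking that the resulting determinant is nonzero, and matches the discriminant expected for the essential lattice of a rational elliptic surface with a single $IV$ fiber, simultaneously proves independence and pins down the index of $\langle P_1,\dots,P_6\rangle$ in $\Ee_4(\KK_4(t))$. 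This bookkeeping of component assignments and intersection numbers is the delicate technical heart of the proof.
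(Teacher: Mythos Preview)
Your approach mirrors the paper's almost exactly: parametrize sections as $(at+b,\,t^2+ct+d)$, eliminate to obtain a polynomial in $a$, read off the splitting field from its factorization, then verify independence via the Gram matrix with the sole local correction coming from the type $IV$ fiber at $t=\infty$.

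There is, however, a genuine gap. When you pass from $2cd=3ab^2$ to $d=3b^2/a^2$ you tacitly assume $a\ne 0$, and your subsequent claim that ``the ansatz produces exactly $24$ sections'' omits the three solutions with $a=0$, namely $c=d=0$ and $b^3=-1$. Two of those omitted sections are precisely $P_1=(-1,t^2)$ and $P_2=(-\zeta_3,t^2)$ from the list \eqref{points4} that you later invoke for the Gram-matrix computation. So your derivation as written does not actually produce the six points whose independence you propose to check; the paper handles the $a=0$ branch separately and obtains $P_1,P_2$ there, with $P_3,\dots,P_6$ coming from the $a\ne0$ branch. The full fundamental polynomial is $\Phi_4(a)=a^3(a^{24}+17280a^{12}-110592)$, of degree $27$, not $24$.

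A smaller point: the equation $a^{12}=\alpha_1^{12}$ arises from only one choice of sign in $b=a^4(3\pm\sqrt3)/12$; the other sign yields $a^{12}=-\alpha_2^{12}$ with $\alpha_2=2^{1/4}3^{1/8}(\sqrt3+1)^{1/2}$, giving $12$ further values of $a$ (cf.\ the paper's factorization $\prod_j (a-\zeta_{24}^{2j}\alpha_1)(a-\zeta_{24}^{2j+1}\alpha_2)$). Your assertion that all $24$ sections are defined over $\Q(\zeta_{12},\alpha_1)$ is correct, but it requires checking that this second branch also lands in that field; the paper closes this by factoring over $\Q(\zeta_{24})$ and confirming via the degree-$16$ minimal polynomial $g_4$.
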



To provide our result in the case $m=5$, 
 we fix the following $30$-th root of unity
	$$\zeta_{30}=\frac{1}{8}\left(\sqrt{3}+\mathrm{I}\right) \left( \left(1-  \sqrt{5}\right) \sqrt{\frac{5+  \sqrt{5}}{2}}+\mathrm{I} \left(\sqrt{5}+1\right)\right).$$


\begin{thm}
	\label{main-d}
			For $m=5$,   one has $r_5=8$ and the isomorphism  $ \Ll_5\cong \Ee_5(\KK_5(t))$  
where $\KK_5 = \Q(\zeta_{30}) \left( (60 v_1)^\frac{1}{30}\right),$  with
$$ v_1=564300+252495\,\sqrt {5}+31\,\sqrt {654205350+292569486\,\sqrt {5}},$$ 
	and  a defining minimal polynomial $g_5(x)$ of degree 120 given in \cite[minpols]{Shioda-Codes}.
	
	%
	Moreover,   $\Ee_5(\KK_5(t))$ is generated by the points
	$$P_j=\left( \frac{t^2 + a_j t +b_j}{u_j^2}, \, \frac{t^3 +c_j t^2 + d_j t +e_j}{u_j^3}\right),$$ 
	for $j=1, \ldots, 8,$ 	where   $u_j$ are given by \ref{u-0-8}
	and the coefficients	$a_j, b_j, c_j, d_j$ and $ e_j$  are given in \cite[points-5]{Shioda-Codes}.
	  \end{thm}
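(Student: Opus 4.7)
The strategy parallels the cases $m = 2, 3, 4$ in the preceding theorems: we hunt for non-torsion sections $P = (x(t), y(t)) \in \Ee_5(\overline{\Q}(t))$ by a polynomial ansatz forced by $y^2 = x^3 + t^5 + 1$. Since the right hand side has degree $5$ in $t$, a non-torsion section ought to have the shape
\[
x(t) = \frac{t^2 + a t + b}{u^2}, \qquad y(t) = \frac{t^3 + c t^2 + d t + e}{u^3},
\]
with $a, b, c, d, e \in \overline{\Q}$ and $u \in \overline{\Q}^{\times}$. Substituting and clearing $u^6$ converts the Weierstrass equation into a polynomial identity in $t$; equating the coefficients of $t^0, t^1, \ldots, t^5$ gives six equations in the six unknowns $(a,b,c,d,e,u)$, the $t^6$ coefficient canceling automatically from the leading terms.

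Next, I would reduce this polynomial system by elimination to a single resolvent $R(u) \in \Q[u]$. The fibration $\Ee_5$ admits commuting automorphisms $t \mapsto \zeta_5 t$ and $(x,y) \mapsto (\zeta_3 x, -y)$, generating a group of order $30$ acting on the set of solutions. Accordingly, $R(u)$ should factor, after extending scalars to $\Q(\zeta_{30})$, into irreducible pieces of the form $u^{30} - 60 v_1$, with $v_1 \in \Q(\sqrt{5})$ an explicit algebraic number extracted from the eliminant. Fixing one root $u_1$ and back-substituting recovers a tuple $(a_1, b_1, c_1, d_1, e_1)$, and the remaining $119$ solution tuples are its Galois conjugates. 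The splitting field $\KK_5$ is the subfield of $\overline{\Q}$ generated by all coordinates of all sections, and unwinding the Galois action identifies it with $\Q(\zeta_{30})\bigl((60 v_1)^{1/30}\bigr)$. A primitive element then has minimal polynomial $g_5(x)$ of degree $120$ over $\Q$.

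To certify $r_5 = 8$, I would combine the upper bound $r_5 \le 8$ from Usui's rank table (quoted in the introduction) with an explicit height pairing computation. Choosing eight well-distributed representatives $P_1, \ldots, P_8$ from among the $120$ solutions and computing the Gram matrix $\bigl(\langle P_i, P_j \rangle\bigr)_{i,j}$ via Shioda's height formula (using the Kodaira type of each singular fiber of $\Ee_5$, readable directly from the Weierstrass equation), verification that the resulting determinant is nonzero yields $\Q$-linear independence, forcing $r_5 = 8$ and the isomorphism $\Ll_5 \cong \Ee_5(\KK_5(t))$.

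The principal obstacle is the scale of the explicit algebra. Both the resolvent elimination and the verification of the stated closed form for $v_1$ require sustained symbolic computation. More delicately, to ensure that the particular choice $P_1, \ldots, P_8$ actually generates $\Ll_5$ rather than a proper finite-index sublattice, one must compare the computed Gram determinant with the discriminant of $\Ll_5$ predicted by the Shioda--Tate formula; a mismatch would signal that some $P_j$ should be replaced by a different Galois conjugate from its orbit. This bookkeeping, rather than any conceptual difficulty, is the error-prone part of the argument and is where the authors rely on their companion computer algebra scripts.
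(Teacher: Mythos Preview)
Your strategy matches the paper's almost exactly: the same ansatz for the minimal sections, elimination to a fundamental polynomial in $u$, identification of $\KK_5$ from its splitting, and verification that eight chosen sections generate by computing the Gram matrix and matching its determinant (the paper finds it unimodular, as required for $E_8$).

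Two factual slips are worth correcting. First, there are $240$ minimal sections, not $120$: the paper invokes $\Ee_5(\CC(t))\cong E_8$, so the fundamental polynomial $\Phi_5(u)$ has degree $240$ and the solutions are not a single Galois orbit but split according to the factorization $\Phi_5 = \Phi'_5\cdot\Phi''_5$ over~$\Q$. Second, $v_1$ does not lie in $\Q(\sqrt{5})$---the nested radical in the theorem statement already shows it lives in a degree-$4$ extension of~$\Q$. Neither error derails the argument, but both obscure the structure. The one computational idea you do not anticipate is the paper's two-step substitution $U=u^6$, then $V=U^5/60$: this collapses $\Phi_5$ to a product of two \emph{quartics} $F'(V)\cdot F''(V)$, whose eight roots $v_1,\ldots,v_8$ are then written in closed radical form, making the description $\KK_5=\Q(\zeta_{30})\bigl((60v_1)^{1/30}\bigr)$ explicit rather than merely asserted.
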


\begin{thm}
	\label{main-e}
	For $m=6$,   one has  $ \Ll_6\cong \Ee_6(\KK_6(t))$  
with rank $r_6=8$ and
$\KK_6(t)=\Q  (\zeta_{12})  \left(2^{\frac{1}{3}}\right),$
with a defining minimal polynomial
\begin{align*}
	g_6(x)	& =x^{12} - 3x^{10} - 8x^9 - 6x^8 + 12x^7 + 47x^6 \notag \\
	& \quad + 78x^5 + 78x^4 + 50x^3 + 21x^2 + 6x + 1\label{g6}
\end{align*}

A set of six independent generators of $\Ee_6(\KK_6(t))$ contains eight   points of the form:
$$P_j=\left( a_j t^2 + b_j t + g_j, \ c_j t^3+ d_j t^2 + e_j t + h_j\right)$$
for $j=1, \ldots, 8$, where  the coefficients are given in Section~\ref{sec-6}.	
\end{thm}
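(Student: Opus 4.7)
\emph{Proof strategy.} The plan is to produce explicit polynomial sections of $\Ee_6$ via an ansatz, solve the resulting algebraic system to identify the splitting field $\KK_6$, and verify independence via the Mordell--Weil height pairing, continuing the method used for the cases $m=2,3,4,5$ in the previous theorems.

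First, since $\Ee_6$ has $\chi(\OO)=1$ (i.e., it is a rational elliptic surface, the boundary case in Shioda's family), the integral sections must have the shape
$$x(t) = a t^2 + b t + g, \qquad y(t) = c t^3 + d t^2 + e t + h,$$
matching the form stated in the theorem. Substituting into $y^2 = x^3 + t^6 + 1$ and equating coefficients of $t^0, t^1, \ldots, t^6$ yields a system of seven polynomial equations in the seven unknowns $a, b, c, d, e, g, h$.

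Next, I would solve this system symbolically. The leading-degree equation is $c^2 = a^3 + 1$, and the remaining equations successively express $b, d, e, g, h$ in terms of $a$ and $c$, producing additional constraints that eliminate to a single univariate polynomial of degree $12$ in $a$. Its splitting field is identified as $\Q(\zeta_{12})(2^{1/3})$: the cube root of $2$ arises naturally from the interaction between the $x^3$ term and the constant $+1$ in the defining equation, while $\zeta_{12}$ encodes the joint action of the cyclic symmetries $t \mapsto \zeta_6 t$ and $(x, y) \mapsto (\zeta_3 x, -y)$ on $\Ee_6$. The minimal polynomial $g_6(x)$ of a primitive element (e.g.\ a suitable $\Q$-linear combination of $\zeta_{12}$ and $2^{1/3}$) is then the stated degree-$12$ polynomial.

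Having extracted the eight sections $P_1, \ldots, P_8$, I would verify linear independence by assembling the $8 \times 8$ Mordell--Weil height pairing matrix via Shioda's explicit formula. The singular fibers of $\Ee_6$ lie over the six roots of $t^6 + 1$ (each of Kodaira type II, with trivial component group and contributing only through contact multiplicities), and a standard substitution shows the fiber at infinity is smooth. The Shioda--Tate formula then gives $r_6 = \rho - 2 - \sum (m_v - 1) = 10 - 2 = 8$, consistent with Usui's bound. Non-degeneracy of the Gram matrix forces the $P_j$ to span the whole lattice up to finite index, and matching the discriminant of this matrix to the value predicted by Shioda's theory confirms that they actually form an integral basis of $\Ee_6(\KK_6(t))$.

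The main obstacle is the symbolic elimination step: a naive Gr\"obner-basis attack on the seven-variable system tends to generate spurious algebraic extensions that obscure the true splitting field, so careful triangular decomposition together with tracking of the Galois action of $t \mapsto \zeta_6 t$ on solutions is required to recognise the minimal extension as precisely $\Q(\zeta_{12})(2^{1/3})$. A secondary difficulty is ensuring that the eight sections selected form a genuine $\Z$-basis rather than a sublattice of finite index; this reduces to a determinant comparison once the expected lattice discriminant is determined from Shioda's theory.
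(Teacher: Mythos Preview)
Your overall strategy---ansatz of polynomial sections, elimination, Gram matrix verification---matches the paper's in broad outline, but there are two concrete gaps.

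First, the claim that elimination yields ``a single univariate polynomial of degree~$12$ in~$a$'' is wrong. The full set of integral sections of a rational elliptic surface with $\Ee(\Kb)\cong E_8$ has $240$ elements, and the elimination polynomial (the \emph{fundamental polynomial}) has degree $240$ in the appropriate specialization variable; after the substitution $U=u^{12}$ it collapses to degree~$20$, not~$12$. You appear to have conflated the degree of the splitting field $[\KK_6:\Q]=12$ with the degree of the resolvent, but these are unrelated quantities. Without the correct resolvent you cannot actually identify the splitting field.

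Second, and more structurally, you are missing the key device that makes the computation tractable and the splitting-field identification rigorous: the paper does \emph{not} eliminate directly on $\Ee_6$. It first passes to the twisted surface $\Ee_6^-:y^2=x^3+t^6-1$ via $(x,y,t)\mapsto(-x,\,iy,\,\zeta_6 t)$, then shifts $t\mapsto t-1$ so that a Type~$II$ fiber sits at $t=0$. This allows one to use the specialization map $\text{sp}_0$ at that fiber (setting $g'=u^{-2}$, $h'=u^{-3}$), whose injectivity guarantees---via the paper's Theorem~\ref{phi}---that the splitting field of the resulting fundamental polynomial $\Phi_6(u)$ is exactly $\KK_6$. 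Your direct approach over $\Ee_6$ has no singular fiber at a $\Q$-rational point of $\Pp^1$ to specialize at, so even after correctly computing a resolvent you would lack a mechanism to prove its splitting field equals $\KK_6$ rather than merely containing it. The paper then invokes Shioda's earlier result that the roots of $\Phi_6(U)$ are all $12$th powers in $\Q(\zeta_{12},2^{1/3})$, selects eight explicit roots $u_1,\ldots,u_8$, and pulls the resulting sections back to $\Ee_6$, verifying via a unimodular Gram matrix that they generate the full $E_8$ lattice.
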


For the case $m=8$, we use Theorem \ref{main-c} to show the following result. 
\begin{thm}
	\label{main-8}
 	For $m=8$,   one has  $ \Ll_8\cong \Ee_8(\KK_8(t))$    with rank $r_8=6$ where  $\KK_8 =\KK_4 $ and the six 
independent generators  of $\Ee_8(\KK_8(t))$ are  of the form
$$Q_j=\left( a_j t^2 + b_j, \ t^4 + c_j t^2+ d_j\right)$$
where  $ j=1, \ldots, 6$ the coefficients of $Q_j$ are same as the points $P_j$ in Theorem \ref{main-c}.
\end{thm}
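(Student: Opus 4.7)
The plan is to exploit the base change $t \mapsto t^2$ to reduce the case $m=8$ to the case $m=4$ already handled in Theorem~\ref{main-c}. Setting $s = t^2$, the Weierstrass equation $y^2 = x^3 + t^8 + 1$ becomes $y^2 = x^3 + s^4 + 1$, which is exactly the generic fiber of $\Ee_4$. This gives a degree-two morphism $\phi\colon \Ee_8 \to \Ee_4$ of elliptic surfaces over $\Q$, and hence a pullback homomorphism $\phi^*\colon \Ee_4(\CC(s)) \to \Ee_8(\CC(t))$ sending a section $(A(s),B(s))$ to $(A(t^2),B(t^2))$.

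First I would apply $\phi^*$ to the six generators $P_j=(a_j t + b_j,\, t^2+c_j t+d_j)$ of $\Ee_4(\KK_4(t))$ produced by Theorem~\ref{main-c}. This immediately yields the points
$$Q_j=\phi^*P_j=\left(a_j t^2+b_j,\ t^4+c_j t^2+d_j\right), \qquad j=1,\dots,6,$$
with identical coefficients and defined over $\KK_4(t)$. The map $\phi^*$ is injective, since the underlying base change $\Pp^1_t\to\Pp^1_s$, $t\mapsto t^2$, is surjective; therefore the $Q_j$ remain linearly independent over $\CC(t)$. Combining this with Usui's result $r_8=6$ from Table~\ref{tabusui}, the $Q_j$ already span a sublattice of full rank in $\Ll_8$.

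Next I would establish the splitting-field equality $\KK_8=\KK_4$. The inclusion $\KK_4\subseteq\KK_8$ is immediate, as any field of definition for $\Ee_8(\CC(t))$ must contain all coordinates of the $Q_j$. For the reverse inclusion I would decompose the action of the surface involution $\iota\colon t\mapsto -t$ on the rational Mordell--Weil space
$$\Ee_8(\CC(t))\otimes\Q = V^+\oplus V^-,$$
where $V^+=\phi^*\Ee_4(\CC(s))\otimes\Q$ has dimension $r_4=6$. Since the total dimension also equals $r_8=6$, the anti-invariant part $V^-$ vanishes; hence every rational section of $\Ee_8$ is, up to torsion, a pullback from $\Ee_4$ and is defined over $\KK_4(t)$.

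The main obstacle will be to control the possible finite index between $\phi^*\Ee_4(\KK_4(s))$ and the full group $\Ee_8(\KK_4(t))$: a non-trivial cokernel of $\phi^*$, or torsion sections not visible on $\Ee_4$, could in principle force a proper extension of $\KK_4$. I would rule this out by computing the discriminant of the rank-$6$ lattice spanned by the $Q_j$ using Shioda's height-pairing formula and matching it against the discriminant predicted by the singular-fiber configuration of $\Ee_8$; agreement of these two invariants, together with the triviality of $V^-$, confirms that the $Q_j$ generate $\Ll_8$ and that no enlargement of $\KK_4$ is needed.
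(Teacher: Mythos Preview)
Your proposal is correct and follows essentially the same route as the paper: pull back the six generators of $\Ee_4$ along $t\mapsto t^2$, invoke Usui's Table~\ref{tabusui} for the rank, and confirm that the $Q_j$ generate the full lattice by matching the Gram-matrix determinant $2^6/3$. The only difference is that the paper short-circuits your involution/$V^\pm$ argument by citing directly the structural identification $\Ll_8=\Ll_4[2]$ from Table~\ref{tabusui}, which already asserts that the pullback is \emph{all} of $\Ll_8$; your decomposition recovers this fact from the weaker input $r_8=6$ and is a perfectly valid (if slightly longer) substitute.
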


\begin{thm}
	\label{main-f}
	
	For $m=9$,   one has  $ \Ll_9\cong  \Ee_{9}(\KK_{9}(t))$ 
with rank $r_{9}=10$ and $\KK_{9}$ is defined by a polynomial $g_9(x)$ of degree 54 given in \cite[minpols]{Shioda-Codes}.

Moreover, a set of ten  independent generators of  $\Ee_{10}(\KK_{10}(t))$ are of the form
	$$Q_j=\left(a_{j, 0}+ a_{j, 1} t+ a_{j, 2} t^2+a_{j, 3} t^3,\,  b_{j, 0}+ b_{j, 1} t+ b_{j, 2} t^2+b_{j, 3} t^3 + b_{j, 4}t^4\right), $$
 with coefficients in $\KK_{9}$, and  provided  in \cite[points-9]{Shioda-Codes}.

\end{thm}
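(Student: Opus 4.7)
The plan is to proceed in the same spirit as Theorems~\ref{main-a}--\ref{main-e}: fix the singular-fiber configuration, propose an explicit polynomial \emph{ansatz} for the sections, solve the resulting coefficient system, and then verify the height-pairing data.

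First I would pin down the Kodaira configuration of $\mathcal{E}_9$. The discriminant $-27(t^9+1)^2$ produces nine fibers of type $II$ at the roots of $t^9+1=0$. At $t=\infty$, the substitution $s=1/t,\ X=s^4 x,\ Y=s^6 y$ transforms \eqref{shi-eq1} into $Y^2=X^3+s^3(1+s^9)$; since the order in $s$ of the right-hand side is $3$, this is a fiber of type $I_0^*$. Summing Euler numbers gives $(9\cdot 2+6)/12=2$, so $\mathcal{E}_9$ is a K3 surface, and the Shioda--Tate formula together with Usui's Picard number $\rho=16$ confirms $r_9=16-2-4=10$.

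Second, I would search for sections via the ansatz
\[
x(t)=\sum_{i=0}^{3} a_{i}\, t^{i}, \qquad y(t)=\sum_{i=0}^{4} b_{i}\, t^{i}.
\]
The degree bounds are forced by the $I_0^*$ fiber at infinity, whose non-identity components admit $x$ of degree at most $3$ and $y$ of degree at most $4$. Substituting into \eqref{shi-eq1} and equating coefficients of $t^{0},\ldots,t^{9}$ yields a system of $10$ polynomial equations in the $9$ unknowns, starting with $a_{3}^{3}=-1$ and $b_{4}^{2}=3\,a_{3}^{2}\,a_{2}$; top-down elimination reduces it to a zero-dimensional variety, and ten $\overline{\mathbb{Q}}$-points are selected from its solution set to form an independent system modulo the Galois action and the sign involution $y\mapsto -y$.

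Third, the splitting field $\mathcal{K}_9$ is the compositum of the coordinate fields of the $Q_j$. Following Shioda's prediction for the family $y^2=x^3+t^m+a$, this compositum should be a cubic extension of the cyclotomic field $\mathbb{Q}(\zeta_{54})$, of absolute degree $\varphi(54)\cdot 3 = 54$; a minimal polynomial $g_9$ for a primitive element is then obtained via iterated resultants and recorded in \cite[minpols]{Shioda-Codes}.

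The main obstacle is the last step: showing that $Q_1,\ldots,Q_{10}$ generate the \emph{entire} Mordell--Weil lattice $\mathcal{L}_9$, rather than merely a finite-index sublattice. To this end I would assemble the $10\times 10$ height-pairing matrix using Shioda's algorithm with the correction terms coming from the $I_0^*$ contact types, and verify that its determinant matches the discriminant of $\mathcal{L}_9$ predicted by the Shioda--Tate description as the orthogonal complement of the trivial lattice inside $\mathrm{NS}(\mathcal{E}_9)$. Since all computations occur over a number field of degree $54$, this symbolic check is the heaviest part of the proof and is carried out explicitly in \cite[points-9]{Shioda-Codes}.
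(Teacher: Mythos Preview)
Your outline follows the same overall strategy as the paper---polynomial ansatz of bidegree $(3,4)$, solve for coefficients, verify the Gram determinant against $d_9=3^5/4$---and the fiber analysis and Shioda--Tate count are correct. Two places where the paper's execution differs substantively from yours are worth flagging.

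First, instead of a raw multivariate elimination, the paper shifts $t\mapsto t-1$ so that a type~$II$ fiber sits at the origin and then uses the specialization map $\mathrm{sp}\colon P\mapsto a_0/b_0=u$ into $\mathbb{G}_a$ (Theorem~\ref{phi}). The relation $a_0^3=b_0^2$ gives $a_0=u^{-2}$, $b_0=u^{-3}$, and the entire coefficient system collapses to a single fundamental polynomial $\Phi(u)$ of degree~$240$, which factors over $\mathbb{Z}$ into pieces $\Phi_0,\ldots,\Phi_4$ of degrees $2,6,18,27,27$ times a degree-$160$ cofactor. This factorisation is what makes both the identification of $\mathcal{K}_9$ and the selection of ten specific roots $u_1,\ldots,u_{10}$ tractable; your direct top-down elimination on the nine-variable system is valid in principle but much heavier and gives no such arithmetic structure.

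Second, the paper does not hunt for all ten generators ab initio: four of them come for free from the primitive sublattice $\mathcal{L}_3[3]\subset\mathcal{L}_9$ by substituting $t\mapsto t^3$ into the $D_4^*$ generators of Theorem~\ref{main-a}(iii); only six further roots, drawn from $\Phi_2$ and $\Phi_3$, are then required.

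Finally, your guess that $\mathcal{K}_9$ is a cubic extension of $\mathbb{Q}(\zeta_{54})$ is not what the computation produces. The explicit roots show that $\mathcal{K}_9$ contains $\mathbb{Q}(\zeta_{12},2^{1/3})$ and is reached by adjoining quantities such as $\bigl(2^{1/3}(2^{1/3}-1)\bigr)^{1/3}$ and $6^{1/9}(2^{1/3}-1)^{1/9}$; no $\zeta_{27}$ or $\zeta_{54}$ appears, so this part of your sketch should be dropped in favour of simply reading the field off the factored $\Phi(u)$.
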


For the case $m=10$, we use  part (ii) of  Theorem \ref{main-a} and Theorem \ref{main-d}  to conclude that:

\begin{thm}\label{main-g}
	For $m=10$,   one has  $ \Ll_{10}\cong  \Ee_{10}(\KK_{10}(t))$ 
	with rank $r_{10}=10$ where $\KK_{10}=\KK_{5}$.  
 A set of  ten independent generators for $\Ee_{10}(\KK_{10}(t))$ includes:
	$$
	\begin{aligned}
	Q_1&=(-1, t^5), \ \ Q_2=\left(- \zeta_3 (1+\frac{4}{3} t^{10}),   \I \sqrt{3}\, t^5 (1 + \frac{8}{9} t^{10})\right), \\
	Q_j&=\left(\frac{t^4 + a_j t^2 +b_j}{u_j^2}, \, \frac{t^6 +c_j t^4 + d_j t^2 +e_j}{u_j^3}\right),
	\end{aligned}
	$$
   where for $j=3, \ldots, 10$  the coefficients of $Q_j$ are same as the points  $P_{j-2}$  in Theorem  \ref{main-d}.
\end{thm}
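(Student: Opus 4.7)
The plan is to exploit the base-change pullbacks along $t \mapsto t^k$. For every divisor $n$ of $m = 10$, the substitution $t \mapsto t^{m/n}$ defines an injective group homomorphism $\phi_n \colon \Ee_n(\KK(t)) \hookrightarrow \Ee_m(\KK(t))$, since $Y(t^{m/n})^2 = X(t^{m/n})^3 + (t^{m/n})^n + 1 = X(t^{m/n})^3 + t^m + 1$. I would use $\phi_2$ (via $t \mapsto t^5$) and $\phi_5$ (via $t \mapsto t^2$), pulling back generators from $\Ee_2$ and $\Ee_5$ respectively.

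First I would verify $\KK_{10} = \KK_5$: since $\zeta_{30}^{10} = \zeta_3$ and $\zeta_{30} \in \KK_5$, one has $\KK_2 = \Q(\zeta_3) \subseteq \KK_5$, so both pullback images already lie in $\Ee_{10}(\KK_5(t))$. Direct inspection then shows that $\phi_2$ sends the points $P_1, P_2$ of Theorem~\ref{main-a}(ii) to the displayed $Q_1, Q_2$, and $\phi_5$ sends the points $P_1, \ldots, P_8$ of Theorem~\ref{main-d} to $Q_3, \ldots, Q_{10}$ in the advertised form.

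The key input for linear independence is that the two pullback images realize Galois-invariant subgroups: $\phi_2(\Ee_2(\CC(t))) = \Ee_{10}(\CC(t))^{\mu_5}$ and $\phi_5(\Ee_5(\CC(t))) = \Ee_{10}(\CC(t))^{\mu_2}$, where $\mu_k$ acts on $\CC(t)$ via $t \mapsto \zeta_k t$. Their intersection is $\Ee_{10}(\CC(t))^{\mu_{10}} = \phi_1(\Ee_1(\CC(t)))$, which has rank zero by Theorem~\ref{main-a}(i). Hence the ranks add: $\langle Q_1, \ldots, Q_{10} \rangle$ has rank $2 + 8 = 10$, matching $r_{10}$ from Usui's Table~\ref{tabusui}, so this subgroup is of finite index in $\Ll_{10}$.

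The principal obstacle is to upgrade finite-index to equality $\langle Q_1, \ldots, Q_{10} \rangle = \Ll_{10}$. For this I would compute the determinant of the $10 \times 10$ Gram matrix of $(Q_1, \ldots, Q_{10})$ under the Shioda height pairing and match it against the discriminant of $\Ll_{10}$ read off from the singular-fiber configuration of $\Ee_{10}$ via Shioda's formula. The scaling relation $\widehat{h}_{\Ee_{10}}(\phi_n(P)) = (10/n)\,\widehat{h}_{\Ee_n}(P)$ makes the diagonal blocks explicit as $5 \cdot G_2$ and $2 \cdot G_5$ (with $G_2, G_5$ the known Gram matrices from Theorems~\ref{main-a}(ii) and~\ref{main-d}), and the cross-block $\langle \phi_2(\cdot), \phi_5(\cdot)\rangle_{\Ee_{10}}$ admits direct symbolic computation on the listed coordinates. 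The coprimality $\gcd(2,5) = 1$ combined with the rank identity $r_{10} = r_2 + r_5$ makes the equality of the two discriminants, and hence of the lattices, the natural expectation.
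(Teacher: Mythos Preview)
Your approach is correct and lands on the same computation as the paper, but you arrive at the block structure of the Gram matrix by a different (and somewhat longer) road. The paper simply invokes Usui's Table~\ref{tabusui}, which already records the \emph{orthogonal} direct-sum decomposition $\Ll_{10}=\Ll_5[2]\oplus\Ll_2[5]$; this immediately forces the cross-block $\langle\phi_2(\cdot),\phi_5(\cdot)\rangle$ to vanish, so the Gram matrix is block diagonal $\begin{pmatrix}5\M_2&0\\0&2\M_5\end{pmatrix}$ with determinant $5^2\det\M_2\cdot 2^8\det\M_5=2^8\cdot 5^2/3=d_{10}$, and equality of lattices follows at once. Your Galois-invariants argument for the trivial intersection $\phi_2(\Ll_2)\cap\phi_5(\Ll_5)=\phi_1(\Ll_1)$ is a clean, self-contained way to get rank $10$ without citing Usui's structural result, and it would generalize nicely to other coprime factorizations; on the other hand, it only yields a finite-index sublattice, so you are then obliged either to compute the cross-pairings symbolically (as you propose) or to cite the orthogonality from the table anyway. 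Since Table~\ref{tabusui} is already being used for $r_{10}$ and $d_{10}$, the paper's shortcut costs nothing extra.
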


Finally, for the case $m=12$, we proved the following theorem.

\begin{thm} \label{main-h}

   For $m=12$,  one has
     $ \Ll_{12}\cong  \Ee_{12}(\KK_{12}(t))$ 
     has rank   $r_{12}=16$ and 
     $\KK_{12},$  is a number field with a defining minimal  polynomial  of degree 96
     given in \cite[minpols]{Shioda-Codes}.	
     Moreover,  a set of  16 independent generators includes $P_j=(x_j(t), y_j(t) ) $ with
     \begin{align*}
     	x_j(t)& = A_{j,0} t^{4}+A_{j,1} t^{3}+A_{j,2} t^{2}+ A_{j,1} t +A_{j,0}, \\
     	y_j(t) &=B_{j,0} t^{6}+B_{j,1} t^{5}+B_{j,2} t^{4}+B_{j,3} t^{3}+B_{j,2} t^{2}+B_{j,1} t +B_{j,0},
     	\end{align*}
     for $j=1, \ldots, 16$    are as described in Section~\ref{case9}.
\end{thm}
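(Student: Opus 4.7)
The plan is to exploit the natural symmetry of $\Ee_{12}$ suggested by the palindromic form of the claimed generators. The involution
\[
\iota\colon (x,y,t)\longmapsto (x/t^{4},\, y/t^{6},\, 1/t)
\]
preserves $y^{2}=x^{3}+t^{12}+1$ and thus acts on $\Ee_{12}(\CC(t))$. A section $P=(x(t),y(t))$ is $\iota$-fixed precisely when $x(t)=t^{4}x(1/t)$ and $y(t)=t^{6}y(1/t)$, i.e.\ when $x(t)$ is palindromic of degree $\leq 4$ and $y(t)$ is palindromic of degree $\leq 6$, which is exactly the ansatz of the theorem.

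Starting from
\[
x(t)=A_{0}t^{4}+A_{1}t^{3}+A_{2}t^{2}+A_{1}t+A_{0}, \qquad y(t)=B_{0}t^{6}+B_{1}t^{5}+B_{2}t^{4}+B_{3}t^{3}+B_{2}t^{2}+B_{1}t+B_{0},
\]
I would substitute into $y(t)^{2}-x(t)^{3}-t^{12}-1=0$ and compare coefficients. There are seven unknowns $A_{0},A_{1},A_{2},B_{0},B_{1},B_{2},B_{3}$; the identity has thirteen $t$-coefficients, but the palindromic symmetry collapses these into seven independent conditions, yielding a zero-dimensional system. I would solve it by elimination, say by Gr\"obner bases or iterated resultants, first expressing $B_{0},\ldots,B_{3}$ as algebraic functions of $A_{0},A_{1},A_{2}$ and then reducing to a single univariate polynomial whose splitting field over $\Q$ is conjecturally $\KK_{12}$, of degree $96$ and referenced in \cite[minpols]{Shioda-Codes}.

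Once the sixteen palindromic solutions $P_{1},\ldots,P_{16}$ are in hand, the task is to show they span a sublattice of full rank $16$ in $\Ll_{12}$. I would apply Shioda's explicit height formula, computing the pairings $\langle P_{i},P_{j}\rangle$ from local contributions at the singular fibers above the twelve roots of $t^{12}+1$ and at $t=\infty$ (whose types are determined by a standard Tate-algorithm computation for $\Ee_{12}$). The determinant of the resulting $16\times 16$ Gram matrix, compared with the known discriminant of $\Ll_{12}$ coming from Shioda--Tate together with Usui's rank formula $r_{12}=16$, simultaneously confirms linear independence and identifies the index $[\Ll_{12}:\langle P_{1},\ldots,P_{16}\rangle]$. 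Minimality of $\KK_{12}$ then follows since no proper subfield of $\CC$ contains the coordinates of all sixteen $P_{j}$.

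The principal obstacle is computational: the $7\times 7$ polynomial system has high enough total degree that explicit resolution and identification of the degree-$96$ minimal polynomial require heavy computer algebra, best organized by working relatively over the cyclotomic subfield $\Q(\zeta_{12})\subset\KK_{12}$ and factoring the intermediate resultants there. A secondary subtlety is to rule out that the $\iota$-invariant ansatz captures only a proper sublattice, which would force supplementation by $\iota$-anti-invariant sections $(x(t),-y(t))$ with $y$ antipalindromic; the height-pairing calculation above is what certifies that this does not occur and that the palindromic family alone already generates $\Ll_{12}$ up to finite index.
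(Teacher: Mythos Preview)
Your approach is different from the paper's. The paper does not attack $\Ee_{12}$ directly; it observes that the change of coordinates $(x,y)\mapsto(t^{2}x,t^{3}y)$ identifies $\Ee_{12}$ with the elliptic $K3$ surface $\Fc_{6}\colon y^{2}=x^{3}+t^{6}+t^{-6}$ over $\CC(t)$, whose splitting field (degree $96$) and sixteen independent generators were worked out in the companion paper \cite{Salami2022}. Everything for $\Ee_{12}$---the field $\KK_{12}$, the sixteen points, and the Gram matrix with determinant $2^{4}\cdot 3^{4}$---is then transported through that isomorphism.

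Your direct route via the palindromic ansatz has a real gap, not merely the secondary subtlety you flag. Your involution $\iota$ is precisely the tilde automorphism $P\mapsto\widetilde{P}$ appearing in Usui's description $\Ll_{12}=\Ll_{6}[2]+\Ll_{4}[3]+\widetilde{\Ll_{4}[3]}+\Hc_{3}$, and the very presence of $\widetilde{\Ll_{4}[3]}$ as a summand distinct from $\Ll_{4}[3]$ shows that $\iota$ does not act as the identity on $\Ll_{12}$. Concretely, the section $P=(-1,-t^{6})\in\Ll_{6}[2]\subset\Ll_{12}$ satisfies $\iota(P)=(-t^{4},-1)\neq P$. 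Hence the $\iota$-fixed sublattice has rank strictly below $16$, your seven-variable palindromic system cannot produce sixteen independent sections, and the Gram-determinant check you propose will expose the deficit rather than rule it out. To finish along your lines you must also solve the companion system with palindromic $x$ and anti-palindromic $y$ (the $\iota=-1$ eigenspace) and merge bases of the two eigenspaces. In effect this duplicates what the paper already did for $\Fc_{6}$, where the second block of eight generators is obtained via an auxiliary automorphism $\phi_{6}$ that breaks the $t\leftrightarrow 1/t$ symmetry; the paper's reduction to $\Fc_{6}$ simply packages both halves at once.
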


In our computations, we mostly used    {\sf{Maple}} and   {\sf{PARI/GP}}  in {\sf{SageMath}}.

\section{Preliminaries  on elliptic surfaces}
In this section, we provide basic definitions and some of the known results on elliptic surfaces and their Mordell--Weil lattices. 
The interested readers can find more details on the concepts in
\cite{Shioda1990a, Schuett2019, Silverman1994}.
\subsection{Elliptic surfaces and Mordell--Weil lattices}

Let $k \subset \CC$ be a number  field,  $\Cc$ a smooth projective curve
defined over $k$,  $K=k(\Cc)$  and $\Kb=\CC(\Cc)$ the function fields of $\Cc$ over $k$ and $\CC$, respectively.

Given an elliptic curve $\Ee$ over $\Kb$ defined by $ y^2=x^3+ a x + b$ with  $a, b\in \Kb$ and 
discriminant  $\Delta(\Ee) = - 16 (4 a^3+ 27 b) \in \Kb \backslash \CC$,  we denote by $\Ee(\Kb)$ the group of $\Kb$-rational points of $\Ee$
with the identity $\OO \in \Ee(\Kb).$
A generalization of the famous Mordell--Weil theorem over function fields, due to Lang--N\'{e}ron, implies
that $\Ee(\Kb)$  is  a finitely generated abelian group under a mild condition. 
In other words, there is a finite group $\Ee(\Kb)_{tors}$, called the {\it torsion subgroup} of $\Ee$, and a non-negative integer
$r=\rk (\Ee(\Kb))$ called the {\it Mordell--Weil rank} or simply the {\it rank} of $\Ee$ over $\Kb$ such that $\Ee(\Kb) \cong \Ee(\Kb)_{tors} \oplus \Z^r$.

Let  $\pi: \Sc_\Ee \rightarrow \Cc$  be the {\it elliptic surface associated to} $\Ee$ over $\Kb$, i.e. the Kodaira--N\'{e}ron model of 
$\Ee$ over $\Kb$, where  $\Sc_\Ee$ is a smooth complex projective surface
and $\pi$ is a relatively minimal fibration with the generic fiber $\Ee$.
We assume that  $\pi$ is not smooth, i.e, there is at least one singular fiber.
In this case, the set of  $\Kb$-rational points  $\Ee(\Kb) $ can be identified with 
the  global sections of  $\pi: \Sc_\Ee \rightarrow \Cc$, i.e, the set of all morphism $\sigma: \Cc \rightarrow \Sc_\Ee$  
such that    $\pi \circ \sigma =id$ over $\CC$.
For each $P\in \Ee(\Kb)$, we denote by $(P)$ the image curve of the corresponding global section
$\sigma_P: \Cc \rightarrow \Sc_\Ee$. In particular, the trivial point $\OO$ corresponds to $(\OO)$,  the
image curve of the morphism  $\sigma_\OO: \Cc \rightarrow \Sc_\Ee$ of  $\pi$, which is called  the   {\it zero section}.

Let ${\rm NS} (\Sc_\Ee)$ be the {\it N\'{e}ron--Severi group} of $\Sc_\Ee$, i.e, 
the group of divisors modulo algebraic equivalence.
By the non-smoothness assumption on $\pi$,  ${\rm NS} (\Sc_\Ee)$ is an  indefinite integral lattice called the  {\it N\'{e}ron--Severi lattice},
with  respect to the  pairing defined by the intersection pairing $(D\cdot D')$, where $D$ and $D'$ are divisors on $\Sc_\Ee$.
The rank  of N\'{e}ron--Severi lattice is called the {\it Picard number} of $\Sc_\Ee$ and denoted 
by ${\it p}(\Sc_\Ee)$.
Let $T$ be the {\it trivial sublattice} of  ${\rm NS} (\Sc_\Ee)$, which is  generated by the zero section $(\OO)$,  a fiber $F$ and
the irreducible components of all  fibers of $\pi$ not meeting the zero section. Then, 
$$T= \left\langle (\OO), F\right\rangle \oplus \left( \bigoplus_{\nu\in R} T_\nu \right),$$
where $R=\{ \nu \in \Cc | F_\nu=\pi^{-1}(\nu) \ \text{is reducible}\}$, and $T_\nu$ is generated by the irreducible components of $F_\nu$
other than  the identity component. We note that each $T_\nu$ is a root lattice of type $A, D$ and $E$ up to a sign.
The orthogonal complement $L=T^\perp$ of $T$ in ${\rm NS} (\Sc_\Ee)$   is called the {\it essential sublattice}   of  ${\rm NS} (\Sc_\Ee).$

By \cite[Theorem 1.3]{Shioda1990a}, the map $P \mapsto (P)  \mod  T$ induces an isomorphism $\Ee(\Kb) \cong {\rm NS} (\Sc_\Ee)/T$, and hence 
a unique homomorphism $\varphi: \Ee(\Kb) \rightarrow {\rm NS} (\Sc_\Ee) \otimes \Q$ satisfying 
$\varphi(P) \equiv (P) \mod  T\otimes \Q$, $Im(\varphi)$ is perpendicular to $T$, and 
$ker(\varphi)= \Ee(\Kb)_{tors}$. By \cite[Theorem 8.4]{Shioda1990a},  the pairing map 
$\left\langle ,  \right\rangle : \Ee(\Kb) \times \Ee(\Kb) \rightarrow \Q$, which is given by 
$\left\langle P_1, P_2\right\rangle \colonequals  - \left(\psi(P_1)\cdot \psi(P_2) \right)$,
where $P_1, P_2 \in \Ee(\Kb)$ and $(\cdot )$ denotes the intersection number of sections, 
defines the structure of a positive definite lattice on $\Ee(\Kb)/\Ee(\Kb)_{tors}$ which is called the {\it Mordell--Weil lattice} of
$\Ee$ over $\Kb$ or of  $\pi: \Sc_\Ee \rightarrow \Cc$.  

Given any finite covering $f : \Cc' \rightarrow \Cc$ of smooth projective curves of degree $m\geq 2$ over $\CC$ and the elliptic surface   $\pi: \Sc_\Ee \rightarrow \Cc$, we denote by  $\Ee'$ the generic fiber of the  base change $\pi': \Sc_{\Ee'}\colonequals \Sc_\Ee \times \Cc' \rightarrow \Cc'$ and let $\Kb'=\CC(\Cc')$, which is a  finite extension of $\Kb$ of degree $m\geq 2$. Then, 
by \cite[Proposition 8.12]{Shioda1990a}, for any
$P_1, P_2 \in \Ee(\Kb)$, we have 
$\left\langle Q_1 , Q_2\right\rangle' =m \cdot \left\langle P_1\, ,\, P_2\right\rangle,$
where $Q_1 , Q_2 \in \Ee'(\Kb')$  and 
$\left\langle \, ,\,\right\rangle'$ is the height pairing on  $\Ee'(\Kb')$ the set of global sections of $\pi'.$

The elliptic surface $\Sc_\Ee$ 
is called a {\it rational elliptic surface} if  it is birational to $\Pp^2$, the projective space of dimension two.
In this case, $\Cc$ is isomorphic to the projective line $\Pp^1$, hence $\Kb=\CC(t)$   and the generic fiber  
is an elliptic curve  given by the Weierstrass equation  
\begin{equation}
	\label{ell1}
	\Ee: y^2=x^3+ a(t) x + b(t)
\end{equation}
with  $a(t), b(t)\in \CC(t)$ satisfying
$\deg a(t)\leq 4$,  $\deg b(t)\leq 6$ and  $\Delta(\Ee)$ is non-constant.
The  surface $\Sc_\Ee$ is  called an {\it elliptic  $K3$ surface} if   $4<\deg a(t)\leq 8$ and  $6< \deg b(t)\leq 12$ hold;
otherwise, it is called an {\it honestly elliptic  surface}.
The {\it arithmetic genus}  $\Xc (\Sc_\Ee)$ of $\Sc_\Ee$ is the smallest integer $n$ such that $\deg(a(t))\leq 4 n$ and $\deg(b(t))\leq 6 n$.
It is  equal to $1$ and $2$ if $\Sc_\Ee$ is a rational or a $K3$ surface, respectively, 
and     $\Xc (\Sc_\Ee) \geq 2$ for  any  honestly elliptic surface $\Sc_\Ee$.
For example, we have $\Xc (\Sc_\Ee) =60$ for the elliptic surface associated to the Shioda's curve \ref{shi-eq1} with $m=360$, since
$\deg( t^{360}+1)= 6\cdot 60$.

Given $P, P_1, P_2 \in \Ee(\Kb)$, the explicit formulas for height pairing $\left\langle ,  \right\rangle$ are given  as follows:
\begin{equation}
	\label{hf1}
	\begin{cases}
		\left\langle P_1, P_2\right\rangle = \displaystyle \Xc (\Sc_\Ee)+ (P_1\cdot O)+(P_2\cdot O)- (P_1 \cdot P_2) - \sum_{\nu \in R}^{} \text{contr}_{\nu}(P_1, P_2),\\
		\left\langle P, P\right\rangle = \displaystyle 2 \Xc (\Sc_\Ee)+ 2 (P \cdot O) - \sum_{\nu \in R}^{} \text{contr}_{\nu}(P), 
	\end{cases}
\end{equation}
where  $(P_1\cdot P_2)=((P_1), (P_2))$ denotes the intersection
number of the sections $(P_1)$ and $(P_2),$  see  \cite[Theorem 8.6]{Shioda1990a}  for the proof.
The {\it local contribution terms} $\text{contr}_{\nu}(P_1, P_2)$ and 
$ \text{contr}_{\nu}(P)$ are non-negative rational numbers which  depend on the reducible fibers of $\pi$ over $\nu$
and  can be computed according to  Table 8.9 in \cite{Shioda1990a}.
In particular,   both of them are equal to zero when there is no reducible fibers.  

The subgroup $\Ee(\Kb)^\circ$ of $\Ee(\Kb)$, consisting of those sections meeting the identity component of every fiber of $\pi$, 
is a torsion-free subgroup of $\Ee(\Kb) $ of finite index and becomes a positive definite even lattice. 
It is called the {\it narrow Mordell--Weil lattice} of $\Ee$ over $\Kb$, which is   isomorphic  to the opposite lattice $L^-$ of the orthogonal complement  of $T$ in ${\rm NS} (\Sc_\Ee)$. Given  $P_1, P_2 \in \Ee(\Kb)^\circ$, one has the following formulas for the height paring:
\begin{equation}
	\label{hf2}
	\begin{cases}
		\left\langle P_1, P_2\right\rangle  =  \Xc (\Sc_\Ee)+ (P_1 \cdot O)+(P_2 \cdot O)- (P_1 \cdot P_2),\\ \noalign{\medskip}
		\left\langle P, P\right\rangle = 2 \Xc (\Sc_\Ee)+ 2 (P\cdot  O) \geq 2 \Xc (\Sc_\Ee). 
	\end{cases}
\end{equation}
The quotient $\Ee(\Kb)/ \Ee(\Kb)_{tors}$ is contained in the dual lattice $M^*$ of $M=\Ee(\Kb)^\circ$, and the equality holds 
if ${\rm NS} (\Sc_\Ee)$ is an unimodular lattice. In this case, 
$[M^*:M]=\det M=(\det T)/ | \Ee(\Kb)_{tors}|^2$. We cite the reader to \cite[Theorem 9.1]{Shioda1990a} for more details.

The  {\it Shioda--Tate} formula,
 given by  \cite[Crollary 5.3]{Shioda1990a}, 
 provides a relation between the rank of $\Ee(\Kb)$ and the Picard number    $\Sc_\Ee$ as follows:
\begin{equation}
	\label{shi-tate-f}
	\rk(\Ee(\Kb))={\it p}(\Sc_\Ee) -2 - \sum_{\nu \in R}^{}(m_{\nu}-1),
\end{equation}
where $m_{\nu}$ is the number of irreducible components of the reducible  fibers of 
 $\pi$ over $\nu$.
For the  rational elliptic surfaces, since ${\rm NS} (\Sc_\Ee)$ is unimodular of rank  
${\it p}(\Sc_\Ee) = 10$, $\rk(\Ee(\Kb))$ is at most $8$ by the Shioda--Tate formula \ref{shi-tate-f}.
In the case of  elliptic $K3$ surfaces,  the rank $\rk(\Ee(\Kb))$ can  be at most $18$.

The following two theorems completely determine the structure of Mordell--Weil lattice of rational elliptic surfaces.
See \cite[Theorem 10.4]{Shioda1992a} and \cite[Theorem 10.10]{Shioda1992a} for the proofs, respectively.
\begin{thm}
	\label{shi1}
	Given 	a rational elliptic surface 	$\pi: \Sc_\Ee \rightarrow \Pp^1 $  with rank  $\geq 6$, 
	the narrow Mordell--Weil lattice $\Ee(\Kb)^\circ$ is the root lattice $E_8$, $E_7$ or $E_6$, according to whether 
	the morphism	$\pi $ has (i) no reducible fibers, (ii) only one reducible fiber of type $I_2$ or $II$,
	or (iii) one reducible fiber of type $I_3$ or $IV$. Moreover, $\Ee(\Kb)$ is torsion-free and isomorphic to  $E_8$, $E_7^*$ or $E_6^*$ accordingly.
\end{thm}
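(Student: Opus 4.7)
The plan is to combine the Shioda-Tate framework with the unimodularity of $\mathrm{NS}(\Sc_\Ee)$ for a rational elliptic surface, then identify $\Ee(\Kb)^\circ$ from its rank, parity and determinant, and finally extract $\Ee(\Kb)$ via the relation $[M^*:M]=\det M=|\det T|/|\Ee(\Kb)_{\mathrm{tors}}|^2$ recalled just above the statement. Throughout, $\Xc(\Sc_\Ee)=1$, and adjunction applied to any section $P\cong\Pp^1$ yields $(P\cdot P)=-\Xc(\Sc_\Ee)=-1$; combined with $(F\cdot F)=0$ and $(O\cdot F)=1$, this shows that $\langle O,F\rangle\subset\mathrm{NS}(\Sc_\Ee)$ is a unimodular sublattice of signature $(1,1)$, so its orthogonal complement in the signature-$(1,9)$ lattice $\mathrm{NS}(\Sc_\Ee)$ is automatically negative definite of rank $8$.

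In case (i) there are no reducible fibers, so $T=\langle O,F\rangle$ and $|\det T|=1$. The height formula \ref{hf2} with $\Xc(\Sc_\Ee)=1$ reads $\langle P,P\rangle=2+2(P\cdot O)\in 2\Z$ for every $P\in M\colonequals\Ee(\Kb)^\circ$, so $M$ is an even positive definite lattice of rank $8$ whose determinant equals $1/|\Ee(\Kb)_{\mathrm{tors}}|^2$. Integrality forces $\Ee(\Kb)_{\mathrm{tors}}=0$ and $\det M=1$, and the classical uniqueness of the even unimodular lattice of rank $8$ identifies $M\cong E_8$; since there is no fibral contribution to enlarge $M$, one gets $\Ee(\Kb)=\Ee(\Kb)^\circ\cong E_8$ (torsion-free).

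In cases (ii) and (iii) the single reducible fiber contributes a summand $T_\nu$ with $|\det T_\nu|=2$ (type $A_1^-$) or $3$ (type $A_2^-$), so by the Shioda-Tate formula \ref{shi-tate-f} the rank of $M$ is $7$ or $6$ and $|\det T|$ equals $2$ or $3$. Evenness of $M$ again follows from \ref{hf2}. Since $|\det T|$ is prime in each case, integrality of $\det M=|\det T|/|\Ee(\Kb)_{\mathrm{tors}}|^2$ forces $\Ee(\Kb)_{\mathrm{tors}}=0$ and $\det M\in\{2,3\}$. The classification of positive definite even lattices of these invariants pins down $M\cong E_7$ and $E_6$ respectively, and the cited formula then gives $\Ee(\Kb)\cong M^*\cong E_7^*, E_6^*$.

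The main obstacle is the lattice-classification step: verifying that an even positive definite lattice of rank $8,7,6$ and determinant $1,2,3$ must be $E_8, E_7, E_6$. Rank $8$ is classical. For ranks $6$ and $7$, I would use that the minimal vectors of $M$ have norm $2$ and are therefore roots; their $\Z$-span is a full-rank root sublattice, and the ADE classification combined with the discriminant constraint rules out the reducible candidates (e.g. $E_6\oplus A_1$, $D_6\oplus A_1$, $A_6\oplus A_1$ in rank $7$, and $A_6, D_6, A_5\oplus A_1$ in rank $6$), leaving $E_7$ and $E_6$ as the only possibilities. A secondary bookkeeping difficulty is keeping straight the sign convention relating $L=T^\perp\subset\mathrm{NS}(\Sc_\Ee)$ to its positive-definite incarnation, and matching the discriminant form on $L^*/L$ with $T^*/T$; this is purely formal but needs to be spelled out to conclude that $\Ee(\Kb)/\Ee(\Kb)_{\mathrm{tors}}=M^*$ under the unimodularity hypothesis.
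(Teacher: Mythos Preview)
The paper does not give its own proof of this theorem; it simply records it as background and cites \cite[Theorem~10.4]{Shioda1992a}. Your argument is correct and follows the standard line: unimodularity of $\mathrm{NS}(\Sc_\Ee)$, the height formula with $\Xc=1$ to get evenness, the determinant relation to kill torsion, and then lattice identification.

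The one place that could be streamlined is the identification of $M$ in ranks $6$ and $7$. Your sketch (``the norm-$2$ vectors span a full-rank root sublattice, then eliminate by discriminant'') is plausible but, as you yourself flag, the spanning claim is not free. Shioda's original argument avoids this: since $\langle O,F\rangle$ is unimodular of signature $(1,1)$, its orthogonal complement in $\mathrm{NS}(\Sc_\Ee)$ is even (adjunction with $K_{\Sc_\Ee}=-F$ gives $D^2\equiv D\cdot F\pmod 2$), unimodular, negative definite of rank $8$, hence $E_8^-$. Then $L=T^\perp$ is the orthogonal complement of the root sublattice $T_\nu\cong A_1^-$ or $A_2^-$ \emph{inside} $E_8^-$; since all copies of $A_1$ (resp.\ $A_2$) in $E_8$ are equivalent under the Weyl group, one reads off $M\cong E_7$ (resp.\ $E_6$) directly, with no abstract classification needed. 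Either route reaches the same conclusion; the embedding-into-$E_8$ version is just shorter.
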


\begin{thm}
	\label{shi2}
	The Mordell--Weil  group  $\Ee(\Kb)$ of a rational elliptic surface	$\pi: \Sc_\Ee \rightarrow \Pp^1 $  is generated by points (sections) $P$ with $(P\cdot O)=0$, i.e.,
	the points $P$'s with $ \left\langle P, P \right\rangle  \leq 2$.
	If $ \Ee$ is given by \ref{ell1}, then 
	there exist
	at most $240$ points of the form 
	$$P=(a t^2+ b t +g, c t^3+ d t^2 + e t + h),$$
	generating $\Ee(\Kb)$, with $g, h, a, b, c, d, e \in \CC$.
\end{thm}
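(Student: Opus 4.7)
The plan is to combine the explicit height formulas \eqref{hf1} and \eqref{hf2} with the classification of the narrow Mordell--Weil lattice given in Theorem \ref{shi1}. Since $\Sc_{\Ee}$ is rational, its arithmetic genus is $\Xc(\Sc_{\Ee})=1$. Plugging this into the narrow-lattice height formula in \eqref{hf2}, for any $P\in\Ee(\Kb)^\circ$ one obtains
\[
\left\langle P, P\right\rangle = 2+2(P\cdot O),
\]
so $\left\langle P, P\right\rangle=2$ exactly when $(P\cdot O)=0$, and $\left\langle P, P\right\rangle\geq 2$ for every nonzero $P\in \Ee(\Kb)^\circ$. First I would use this to identify the norm-$2$ vectors in $\Ee(\Kb)^\circ$ with the sections disjoint from the zero section.

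Next I would invoke Theorem \ref{shi1} (and its lower-rank analogues) to conclude that $\Ee(\Kb)^\circ$ is a root lattice of type $E_6$, $E_7$, or $E_8$ (or a sublattice thereof if $\rk<6$). Since a root lattice is, by definition, generated by its roots, and the roots are precisely the vectors of norm $2$, it follows that $\Ee(\Kb)^\circ$ is generated by the sections $P$ with $(P\cdot O)=0$. To pass from the narrow lattice to the full $\Ee(\Kb)$, I would exploit that the quotient $\Ee(\Kb)/\Ee(\Kb)^\circ$ is finite and embeds into the discriminant group of $\Ee(\Kb)^\circ$; concretely, coset representatives can be chosen as sections meeting non-identity components of reducible fibres, and the height formula \eqref{hf1} together with the explicit values of the local contributions $\text{contr}_\nu(P)$ tabulated in \cite[Table~8.9]{Shioda1990a} shows that such representatives also satisfy $(P\cdot O)=0$, hence $\left\langle P,P\right\rangle\leq 2$. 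This gives the first assertion.

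For the shape of the coordinates, I would translate $(P\cdot O)=0$ into the statement that the image curve of $\sigma_P$ does not meet the zero section (which on each Weierstrass fibre is the point at infinity $[0:1:0]$). This means $x(P)$ and $y(P)$ are polynomials in $t$ rather than rational functions with poles. Comparing degrees in the Weierstrass relation $y(P)^2=x(P)^3+a(t)x(P)+b(t)$ together with the bounds $\deg a(t)\leq 4$ and $\deg b(t)\leq 6$ forces $\deg_t x(P)\leq 2$ and $\deg_t y(P)\leq 3$, which is exactly the form $P=(at^2+bt+g,\,ct^3+dt^2+et+h)$.

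Finally, the numerical bound $240$ is the number of roots of $E_8$, which is the maximal possible value of $\Ee(\Kb)^\circ$ (attained when $\pi$ has no reducible fibres); for all other cases the root system is a proper sub-system of $E_8$ and hence has fewer roots, so $240$ is a universal upper bound on the number of generating sections of this shape. The main obstacle I foresee is the second step: while the narrow sublattice is handled cleanly by Theorem \ref{shi1}, showing that the full $\Ee(\Kb)$ is generated by sections with $(P\cdot O)=0$ requires a careful analysis of the discriminant form of $\Ee(\Kb)^\circ$ and a verification, for each possible configuration of reducible fibres in a rational elliptic surface, that representatives of the non-trivial cosets in $\Ee(\Kb)/\Ee(\Kb)^\circ$ can indeed be realized by sections disjoint from the zero section.
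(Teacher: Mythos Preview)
The paper does not supply its own proof of this theorem: it is quoted as background, and the reader is referred to \cite[Theorem~10.10]{Shioda1992a} for the argument. So there is no in-paper proof to compare your proposal against.

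That said, your sketch is essentially Shioda's original strategy and is sound, with one small correction. The degree bounds $\deg_t x(P)\leq 2$ and $\deg_t y(P)\leq 3$ do \emph{not} follow from the Weierstrass relation alone; that relation only forces $2\deg y=3\deg x$ once both sides dominate $a(t)x$ and $b(t)$, so e.g.\ $\deg x=4$, $\deg y=6$ is not excluded. The bounds come instead from reading $(P\cdot O)=0$ at the fibre over $t=\infty$: in the chart $s=1/t$ the section has coordinates $X=s^{2}x$, $Y=s^{3}y$, and ``not meeting $O$ at $s=0$'' is precisely $\deg x\leq 2$, $\deg y\leq 3$. As for the step you flag as the main obstacle---passing from $\Ee(\Kb)^\circ$ to $\Ee(\Kb)$---the key fact (recorded in the paper just before the Shioda--Tate formula) is that for a rational elliptic surface ${\rm NS}(\Sc_\Ee)$ is unimodular, so $\Ee(\Kb)/\Ee(\Kb)_{\rm tors}=(\Ee(\Kb)^\circ)^{*}$; the minimal vectors in each nonzero coset of a root lattice in its dual have norm strictly below $2$ (e.g.\ $3/2$ for $E_7^*$, $4/3$ for $E_6^*$), and the height formula then forces $(P\cdot O)=0$ for those representatives as well.
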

%
It is known that these points are in one-to-one correspondence with  the  roots of a polynomial
attached to the rational elliptic surfaces, which is described in the next subsection in a general context.
\subsection{Galois representations and fundamental polynomials}
Let $G=\text{Gal}(\CC/k)$, $\Cc$ a smooth projective curve  defined over $k$ and  let $K, \Kb$ be the function field of $\Cc$ over $k$ and $\CC$ respectively.
Let   $\Ee$ be the generic fiber of an elliptic surface $\pi: \Sc_{\Ee} \rightarrow \Cc$ defined over $k$. In a natural way, the Galois group $G$
acts on $\Ee(\Kb)$ and $\Ee(\Kb)^\circ $  coincides with $\Ee(\Kb)^G$ the subgroup of $G$-invariants sections.
By  \cite[proposition 8.13]{Shioda1992a},  the homomorphism    $\varphi: \Ee(\Kb) \rightarrow {\rm NS} (\Sc_\Ee) \otimes \Q$ is
$G$-equivariant and the pairing map $\left\langle , \right\rangle $ on $\Ee(\Kb)$ is stable under action of $G$.
Therefore, we get a Galois representation $$\rho: G =\text{Gal}(\CC/k) \rightarrow \aut\left(  \Ee(\Kb), \left\langle , \right\rangle \right)  $$
where $\aut\left(  \Ee(\Kb), \left\langle , \right\rangle \right)  $ is a finite group.
Let $k \subseteq \KK \subset \CC $ be  the extension of $k$ corresponding to  
$\ker(\rho)$  the kernel of $\rho$ by Galois theory; 
equivalently, $\KK$ is the smallest extension of $k$ such that  $\Ee(\KK(\Cc))=\Ee(\Kb)$.
By the definition $\KK|k$ is a finite Galois extension such that $\text{Gal}(\KK/k) =\text{Im}(\rho)$ the image of $\rho.$ 
Determining  $\text{Im}(\rho)$ is
the main problem concerning  with the Galois representation $\rho$. In particular, one may ask:  How big or how small can $\text{Im}(\rho)$ be?

Let $\nu \in R$, $F_{\nu}=\pi^{-1}(\nu)$ be a singular fiber of the elliptic surface $\pi: \Sc_{\Ee} \rightarrow \Cc$, and
$I$ a $G$-stable finite subset of $\Ee(\Kb)$. The {\it fundamental polynomial} of $\Ee$ over $K$ is defined by
$$\Phi(u)= \prod_{P\in I}^{} (u-\text{sp}_{\nu}(P)),$$
where $\text{sp}_\nu$ is the {\it specialization map} at $\nu$ defined as follows.
For any point $P\in \Ee(\Kb)$, the section $(P)$ intersects the fiber $F_{\nu}$ at a unique smooth point of $F_{\nu}$, say $\text{sp}'_{\nu}(P)$.
The smooth part $F_{\nu}^{\#}$ of the fiber $F_{\nu}$ is an algebraic group over $\CC$ 
isomorphic to the product of ${\mathbb G}_a$ or ${\mathbb G}_m$ by a finite group, and 
the map $\text{sp}'_{\nu}:  \Ee(\Kb) \rightarrow F_{\nu}^{\#}(k)$ is $G$-equivariant homomorphism. Then, $\text{sp}_{\nu}$ is defined to be the projection of 
$\text{sp}'_{\nu}$ to  ${\mathbb G}_a$ or ${\mathbb G}_m$. 

The following result connects the splitting field of $\Phi(U)$ with the extension $\KK$ of $k$ as follows.
\begin{thm}
	\label{phi}
	Keeping the notations as above, we assume that $ \nu \in \Cc(k)$.
	 Then,   $\Phi(u)\in k[u]$ and 
	the splitting field  of the equation $\Phi(u)=0$ is contained in $\KK$, and it is equal to  $\KK$ if 
	   $\text{sp}_{\nu}$ is injective and the set $I$ contains generators of $\Ee(\Kb)$.
\end{thm}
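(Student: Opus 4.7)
The statement has three assertions that I would establish in order: (a) $\Phi(u)\in k[u]$; (b) the splitting field $L$ of $\Phi$ is contained in $\KK$; and (c) under the two extra hypotheses, $L=\KK$. Each step rests on the same structural fact, namely the $G$-equivariance of the specialization map $\text{sp}_\nu$. This equivariance is available precisely because $\nu\in\Cc(k)$, so the fiber $F_\nu$, its smooth locus $F_\nu^\#$, and the algebraic-group structure on $F_\nu^\#$ (a product of $\mathbb G_a$ or $\mathbb G_m$ by a finite group) are all defined over $k$; hence both $\text{sp}'_\nu$ and the projection used to define $\text{sp}_\nu$ commute with the $G$-action. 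I would quote this from \cite{Shioda1990a} rather than reprove it.

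For (a), I would argue that since $I$ is $G$-stable and $\text{sp}_\nu$ is $G$-equivariant, the multiset $\{\text{sp}_\nu(P):P\in I\}$ is permuted by $G$. Therefore the elementary symmetric functions of these elements, which are the coefficients of $\Phi(u)$, are fixed by $G=\text{Gal}(\CC/k)$, and so lie in $k$.

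For (b), I would use the defining property of $\KK$: every $P\in\Ee(\Kb)$ is fixed by $H:=\text{Gal}(\CC/\KK)\subset G$. For any $g\in H$ and $P\in I\subset\Ee(\Kb)$, equivariance yields
\[
g\cdot\text{sp}_\nu(P)=\text{sp}_\nu(g\cdot P)=\text{sp}_\nu(P),
\]
so every root of $\Phi$ is $H$-fixed and hence lies in $\KK$. Consequently $L\subset\KK$.

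For (c), I would take an arbitrary $g\in\text{Gal}(\CC/L)$ and show $g\in H$, which by the Galois correspondence forces $\KK\subset L$. Since $g$ fixes every root of $\Phi$, equivariance gives $\text{sp}_\nu(g\cdot P)=\text{sp}_\nu(P)$ for each $P\in I$; as $I$ is $G$-stable, both $g\cdot P$ and $P$ lie in $I$, and the injectivity hypothesis then forces $g\cdot P=P$ for all $P\in I$. Because $g$ acts by a group automorphism on $\Ee(\Kb)$ and $I$ generates $\Ee(\Kb)$, $g$ fixes all of $\Ee(\Kb)$, so $g\in H$. The only delicate point is that injectivity of $\text{sp}_\nu$ is really needed on the $G$-orbits in $I$ (not merely on $I$ itself), but since $I$ is $G$-stable these orbits lie inside $I$ and the hypothesis suffices. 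No deep machinery is required beyond the $G$-equivariance of $\text{sp}_\nu$ and the definition of $\KK$; the proof should be short and essentially formal.
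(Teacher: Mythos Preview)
The paper states this theorem in the preliminaries (Section~2.2) without giving any proof; it is quoted as a known background result from Shioda's theory. Your argument is correct and is exactly the standard one: $G$-equivariance of $\text{sp}_\nu$ (available because $\nu\in\Cc(k)$) together with $G$-stability of $I$ gives $\Phi\in k[u]$ and $L\subset\KK$, and the two extra hypotheses reverse the inclusion via the Galois correspondence precisely as you wrote. There is nothing to compare against in the paper itself, and nothing to correct in your proposal.
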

We note that  one may consider the set 
$I_n=\{P\in \Ee(\Kb):  \left\langle P,  P\right\rangle =n\}$ for some $n\geq 1$ in the definition of $\Phi(u)$.
For example, the set $I_2$ is sufficient in the case of rational elliptic surfaces as stated in Theorem~\ref{shi2} and we will see this fact in the next sections.

\subsection{Numerical invariants and the  structure of Mordell--Weil lattice of $\Ee_m$}
\label{Num-res}
First, we fix  some notation and numerical invariants associated to the elliptic surface 
 $\pi_m: \Sc_m \rightarrow \Pp^1$ with generic fiber $\Ee_m$. 
 For any integer $m \geq 1$,  we denote  the N\'{e}ron--Severi group of $\Sc_m$  by $N_m$, and 
its trivial sublattice,    arithmetic genus,   Lefschetz number, respectively, by $T_m$,  $\Xc_m$, and $\lambda_m$ 
 Moreover, we let  $\Ll_m$  be the Mordell--Weil lattice    $\Ee_m  (\CC(t))$ and use  $r_m$,    $d_m$,  $\mu_m$,  and $\tau_m$, to denote
its  rank as a lattice,   its determinant,  its minimal norm,  and   the number of minimal sections  respectively.

\begin{thm} [T.~Shioda \cite{Shioda1991f}]
	\label{invariants}
	For a given integer $m \geq 1$, one has the following statements.
\begin{itemize}	
	\item [(1)] There are  singular fibers of type $II$ over  roots of $t^m   +1=0$. 
	The 	fiber over $t = \infty $ is regular if $m\equiv 0 \pmod 6$, and singular of type $II^*$,
	$IV^*, I_0, IV$ or $II$ according as $m\equiv 1, 2, 3, 4$ or $5 \pmod 6.$
	
		\item [(2)] The trivial sublattice $T_m$ are  isomorphic to the direct sum of  $\left\langle (\OO), F\right\rangle $
		with lattices $0$ for $m\equiv 0, 5 \pmod 6$, and  
			$E_8$, $E_6$, $D_4$, or $A_2$ corresponding to the cases 
		$m\equiv 1, 2, 3, 4 \pmod 6$, respectively.
		The rank of $T_m$ is $2, 10, 8, 6, 4, 2$ for  $m\equiv 0, 1,  \ldots, 5 \pmod 6$, with determinants
		$\det(T_m) =1, 3, 4$  according to the cases  $m\equiv 0, \pm 1  \pmod 6$, 
		$m\equiv 2, 4  \pmod 6$, and   	$m\equiv 3  \pmod 6$.
		\item [(3)] The  arithmetic genus $\Xc_m$ is $m/6$ for  $m\equiv 0  \pmod 6$ and $[m/6]+1$ otherwise.  
			\item [(4)] The Mordell--Weil rank $r_m$ is $2m - 4- \lambda_m$ for  $m\equiv 0  \pmod 6$ and 
			$2m-2 -   \lambda_m$ otherwise. In particular, one has $\Ll_m \iso 0, A_2^*, D_4^*, E_6^*, E_8, E_8$ of ranks
			 $r_m= 0, 2, 4, 6, 8, 8$ for $m=1, 2, \ldots, 6$, respectively.
	\end{itemize}
\end{thm}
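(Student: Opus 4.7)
The plan is to derive (2)--(4) as consequences of the singular-fiber analysis needed for (1), combined with the Shioda--Tate formula and Noether's formula recalled in the preliminaries.

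\textbf{Fiber analysis.} The Weierstrass model has $a=0$, $b=t^m+1$, and discriminant $\Delta=-432(t^m+1)^2$. At each of the $m$ simple zeros $t_0$ of $t^m+1$ one has $v_{t_0}(b)=1$ and $v_{t_0}(a)=\infty$, so Tate's algorithm returns Kodaira type $II$. At $t=\infty$ I pass to the chart $s=1/t$ and rescale by $x=X/s^{2\Xc_m}$, $y=Y/s^{3\Xc_m}$ with $\Xc_m$ chosen so that the resulting model is minimal at $s=0$. Writing $m=6q+r$ with $0\le r\le 5$, the transformed constant term has $s$-adic valuation $6\Xc_m-m$, which is $0$ when $r=0$ and $6-r$ otherwise; Tate's table then reads off the Kodaira symbols $I_0,\,II^{*},\,IV^{*},\,I_0^{*},\,IV,\,II$ for $r=0,1,2,3,4,5$, proving (1).

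\textbf{Trivial sublattice and arithmetic genus.} Since type $II$ fibers are irreducible, they contribute nothing to $\bigoplus_{\nu\in R}T_\nu$, and only the fiber at infinity (when reducible) enters. The standard identification of Kodaira types with root lattices ($II^{*}\!\leftrightarrow\!E_8$, $IV^{*}\!\leftrightarrow\!E_6$, $I_0^{*}\!\leftrightarrow\!D_4$, $IV\!\leftrightarrow\!A_2$) immediately yields the ranks and determinants listed in (2). For (3), the arithmetic genus is by definition the smallest $n\ge 1$ with $\deg b\le 6n$, and since $\deg b=m$, this forces $\Xc_m=\lceil m/6\rceil$, matching the stated formula.

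\textbf{Rank.} Noether's formula and simple-connectedness give $b_2(\Sc_m)=12\Xc_m-2$ for a relatively minimal elliptic surface over $\Pp^1$. Combining the Shioda--Tate formula with the definition $\lambda_m=b_2(\Sc_m)-{\it p}(\Sc_m)$ yields
\begin{equation*}
r_m=12\Xc_m-4-\sum_{\nu\in R}(m_\nu-1)-\lambda_m,
\end{equation*}
where the fiber analysis supplies $\sum_{\nu\in R}(m_\nu-1)=0,8,6,4,2,0$ for $m\bmod 6=0,1,\ldots,5$. Substituting the value of $\Xc_m$ and simplifying in each residue class produces the two claimed formulas in (4). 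For $1\le m\le 6$ one has $\Xc_m=1$, so $\Sc_m$ is rational and $\lambda_m=0$; Theorem~\ref{shi1} then identifies $\Ll_m^\circ$ as $E_8$, $E_7$, or $E_6$ according to the Kodaira type at infinity, and the duality/torsion statements of that theorem produce the list $0,\,A_2^{*},\,D_4^{*},\,E_6^{*},\,E_8,\,E_8$.

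I expect the main obstacle to be the fiber analysis at infinity: one must choose the substitution so that the transformed Weierstrass model is minimal at $s=0$, and then run Tate's algorithm in each residue class modulo $6$. The remaining ingredients (Shioda--Tate, Noether, the Kodaira classification of root-lattice contributions, and the specialization of Theorem~\ref{shi1} in the rational case) are standard.
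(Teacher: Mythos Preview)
Your argument is correct in substance and is in fact more detailed than what the paper does: the paper's ``proof'' of this theorem is simply a citation to Lemma~3.1, Corollary~3.2, and Propositions~3.3--3.4 of Shioda's original paper \cite{Shioda1991f}, with no argument reproduced. Your sketch---Tate's algorithm for the local fibers, reading the root lattice off the Kodaira type at infinity, $\Xc_m=\lceil m/6\rceil$ from the degree bound, and then Shioda--Tate combined with $b_2=12\Xc_m-2$---is exactly the standard route and presumably what the cited reference does. (Incidentally, you have the fiber at infinity for $m\equiv 3$ right as $I_0^{*}$; the statement in the paper reads ``$I_0$'', which is evidently a typo since the $D_4$ contribution in part~(2) and the proofs in Section~\ref{case2-3} both require $I_0^{*}$.)

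One small point to tighten: your final sentence invokes Theorem~\ref{shi1} to identify all six lattices $\Ll_1,\ldots,\Ll_6$, but that theorem is stated only for rank $\ge 6$ and so covers just $m=4,5,6$. For $m=1,2,3$ (ranks $0,2,4$ with reducible fibers $II^{*},IV^{*},I_0^{*}$) you need the more general structure result for rational elliptic surfaces, namely that $\Ee(\Kb)^{\circ}$ is the orthogonal complement of the root lattice $T_\nu$ inside $E_8$ and $\Ee(\Kb)$ is its dual (this is the Oguiso--Shioda classification, or equivalently \cite[Theorem~9.1]{Shioda1990a} combined with unimodularity of $\mathrm{NS}$ in the rational case). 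That gives $E_8^\perp=0$, $E_6^\perp=A_2$, $D_4^\perp=D_4$, whose duals are $0$, $A_2^{*}$, $D_4^{*}$ as claimed. With that adjustment your proof is complete.
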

\begin{proof}
We cite the reader to  see  Lemma 3.1,     Corollary  3.2,   Propositions 3.3 and 3.4 in \cite{Shioda1991f} for the proof of the above assertions.
\end{proof}

 For any positive numbers $m_1, m_2$,  the  lattice $\Ll_{m}=\Ee_{m}(\CC(t))$ with $m=m_1 m_2$ has a sublattice $\Ee_{m}(\CC(t^{m_2}))$ denoted by 
 $\Ll_{m_1}[m_2]$, whose pairing is $m_2 $ times of the pairing of $\Ll_{m_1}$ and is a primitive sublattive  of  $\Ll_{m}$, i.e.,  $\Ll_{m}/ \Ll_{m_2}$ is torsion-free.
   
  The following result   is proved in a series of papers by  H.~Usui \cite{Usui2000, Usui2001, Usui2006, Usui2008}.
\begin{thm} 
	\label{USUI}
	Keeping the above notations in mind,  for any $m\geq 1$, we have the following:
	\begin{itemize}
		\item [(1)] If $m$ divides $360$, then  the structure of   $\Ll_m$ with associated data are given in Table~\ref{tabusui}.
		\item [(2)] For  general $m$, let $m_1=\gcd(m, 360)$ and $m_2=m/m_1$. Then,
		$$r_m=r_{m_1}, \  \Ll_m =\Ll_{m_1}[m_2], \ d_m=d_{m_1} \cdot m_2^{r_{m_1}},\ \mu_m =m_2 \cdot \mu_{m_1}, \ \tau_m=\tau_{m_1}.$$
	\end{itemize}

\begin{table}[h]
	\centering
	\caption{Data and structure of lattices $\Ll_m$}
	\label{tabusui}
	\begin{tabular}{|c|c|c|c|c|c|} 
		\hline
		$m$ & $r_m$ & $\Ll_m$ & $d_m$ & $\mu_m$ & $\tau_m$ \\  
		\hline\hline
		$1$ & $0$ & $\{0\}$ & $0$ &  & \\ 
		\hline
		$2$ & $2$ & $A_2^*$ & $1/3$ &  $2/3$ &  $6$ \\
		\hline
		$3$ & $4$ & $D_4^*$ & $1/4$ &  $1$ &  $24$ \\
		\hline
		$4$ & $6$ & $E_6^*$ & $1/3$ &  $4/3$ &  $54$ \\
		\hline
		$5$ & $8$ & $E_8^*$ & $1$ &  $2$ &  $240$ \\
		\hline
		$6$ & $8$ & $E_8^*$ & $1$ &  $2$ &  $240$ \\
		\hline
		$8$ & $6$ & $\Ll_4[2]$ & $2^6/3$ &  $8/3$ &  $54$ \\
		\hline
		$9$ & $10$ & $\Ll_9 $ & $3^5/4$ &  $3$ &  $240$ \\
		\hline
		$10$ & $10$ & $\Ll_5[2] \oplus \Ll_2[5] $ & $2^8\cdot 5^2/3$ &  $10/3$ &  $6$ \\
		\hline
		$12$ & $16$ & $\Ll_6[2] + \Ll_4[3] +\widetilde{\Ll_4[3]} + \Hc_3 $ & $2^4 \cdot 3^4$ &  $4$ &  $1848$ \\
		\hline
		$15$ & $12$ & $\Ll_5[3] \oplus \Ll_3[5]  $ & $3^8 \cdot 5^4/4$ &  $5$ &  $24$ \\
		\hline	
		$18$ & $20$ & $\Ll_9[2] +\widetilde{\Ll_9[2]} + \Ll_6[3] $ & $2^{12} \cdot 3^{10}$ &  $6$ &  $674$ \\
		\hline	
		$20$ & $14$ & $\Ll_5[4] \oplus \Ll_4[5]  $ & $2^{16} \cdot 5^{6}/3$ &  $20/3$ &  $54$ \\
		\hline	
		$24$ & $24$ & $\Ll_{12}[2]  +\Hc_4 $ & $2^{20} \cdot 3^{10}$ &  $8$ &  $2040$ \\
		\hline	
		$30$ & $24$ & $\Ll_6[5]  \oplus  \Ll_5[6]  \oplus \widetilde{\Ll_5[6]} $ & $2^{16} \cdot 3^{16} \cdot 5^8$ &  $10$ &  $240$ \\
		\hline
		$36$ & $28$ & $\Ll_{18}[2]  +\Ll_{12}[3] $ & $2^{28} \cdot 3^{22}$ &  $12$ &  $2280$ \\
		\hline	
		$40$ & $14$ & $\Ll_5[8]  \oplus  \Ll_4[10] $ & $2^{30} \cdot 5^{6} /3$ &  $40/3$ &  $54$ \\
		\hline
		$45$ & $18$ & $\Ll_9[5]  \oplus  \Ll_5[9] $ & $3^{21} \cdot 5^{10} /4$ &  $15$ &  $240$ \\
		\hline
		$60$ & $48$ & $\Ll_{12}[5] \oplus \Ll_5[12] \oplus \widetilde{\Ll_5[12]} \oplus \Hc_5 $ & $2^{52} \cdot 3^{36} \cdot 5^{20}$ &  $20$ &  $1848$ \\
		\hline
		$72$ & $36$ & $\Ll_{24}[3] +  \Ll_{18}[4] $ & $2^{56} \cdot 3^{38} $ &  $24$ &  $2472$ \\
		\hline
		$90$ & $36$ & $\Ll_{18}[5]  \oplus  \Ll_5[18]  \oplus \widetilde{\Ll_5[18]} $ & $2^{28} \cdot 3^{42} \cdot 5^{20}$ &  $30$ &  $672$ \\
		\hline
		$120$ & $56$ & $\Ll_{24}[5] \oplus \Ll_5[24] \oplus \widetilde{\Ll_5[24]} \oplus \Hc_5[2] $ & $2^{100} \cdot 3^{44} \cdot 5^{28}$ &  $40$ &  $2040$ \\
		\hline
		$180$ & $60$ & $\Ll_{36}[5] \oplus \Ll_5[36] \oplus \widetilde{\Ll_5[36]} \oplus \Hc_5[3] $ & $2^{76} \cdot 3^{86} \cdot 5^{32}$ &  $60$ &  $2280$ \\
		\hline
		$360$ & $68$ & $\Ll_{72}[5] \oplus \Ll_5[72] \oplus \widetilde{\Ll_5[72]} \oplus \Hc_5[6] $ & $2^{136} \cdot 3^{102} \cdot 5^{40}$ &  $120$ &  $2472$ \\
		\hline
	\end{tabular}
	\centering
\end{table}
	\end{thm}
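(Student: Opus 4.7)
The plan is to split Theorem~\ref{USUI} into a base-change reduction establishing part~(2) and a case-by-case determination of $\Ll_m$ for the finitely many divisors $m \mid 360$ listed in Table~\ref{tabusui}.

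First, for part~(2), I would write $m = m_1 m_2$ with $m_1 = \gcd(m,360)$, so by maximality of $m_1$ one has $\gcd(m_2, 360/m_1) = 1$. The substitution $s = t^{m_2}$ realizes $\Sc_m$ as the Kodaira--N\'{e}ron model of the degree-$m_2$ base change of $\Sc_{m_1}$ along $\varphi : \Pp^1 \to \Pp^1$, $t \mapsto s$, and Proposition~8.12 of \cite{Shioda1990a} supplies an isometric embedding $\Ll_{m_1}[m_2] \hookrightarrow \Ll_m$. Once this embedding is proved to be an equality, the formulas for $d_m$, $\mu_m$ and $\tau_m$ follow mechanically from the scaling of the height pairing by $m_2$. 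The equality $\Ll_m = \Ll_{m_1}[m_2]$, equivalently $r_m = r_{m_1}$, is the substantive claim: I would decompose $\Ll_m \otimes \Q$ into eigenspaces for the $\mu_{m_2}$-action $t \mapsto \zeta t$, observe that the invariant part is exactly $\Ll_{m_1}[m_2] \otimes \Q$, and show the remaining eigenspaces (for nontrivial characters of $\mu_{m_2}$) are zero by a transcendental-lattice/Hodge-theoretic argument: any algebraic class in such an eigenspace would impose congruence relations on the exponents appearing in the Hodge structure of $H^2(\Sc_m)$ that are incompatible with $\gcd(m_2, 360/m_1) = 1$.

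Next, for part~(1), I would treat $m = 1, \ldots, 6$ directly: these surfaces are rational (Theorem~\ref{invariants}(3) gives $\Xc_m = 1$), so $N_m$ is unimodular of rank~10 and $\Ll_m$ is the orthogonal complement of $T_m$ in $N_m$ modulo torsion. Theorem~\ref{invariants} identifies $T_m$ and the rank $r_m$; Theorems~\ref{shi1}--\ref{shi2} then pin down $\Ll_m$ as $E_8$, $E_8^*$, $E_6^*$, $D_4^*$, $A_2^*$ or $0$, yielding rows one through six of Table~\ref{tabusui} together with their $\mu_m$ and $\tau_m$ from standard root-system data. For each remaining $m \mid 360$, I would proceed inductively by combining the sub-base-change lattices $\Ll_{m_1'}[m_2'] \subset \Ll_m$ for every proper factorization $m = m_1' m_2'$ with $m_1' \mid 360$: the invariants of these sublattices are known by induction, and their sum has discriminant computable from their mutual intersections. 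Theorem~\ref{invariants}(4) together with the Shioda--Tate formula~\ref{shi-tate-f} gives the total rank $r_m$, and the determinant $d_m$ is pinned down by $\det T_m$ modulo the order of the torsion subgroup via the unimodularity (or known discriminant) of $N_m$. The auxiliary lattices $\Hc_3$, $\Hc_4$, $\Hc_5$ appearing in rows $m = 12, 24, 60, \ldots$ are precisely the glue needed when $\Ll_m$ fails to split as an orthogonal direct sum of its sub-base-change parts.

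The main obstacle will be this glue analysis for highly composite $m$: the sub-base-change lattices overlap nontrivially and do not by themselves exhaust $\Ll_m$. Locating the missing generators demands explicit sections of low height together with careful calculation of the local contribution terms in \ref{hf1}, in order to verify both their height pairings and their position inside the discriminant group. This is the technical core of \cite{Usui2000, Usui2001, Usui2006, Usui2008}, where Usui proceeds case by case from $m = 8, 9, 10, 12$ upwards, culminating in the rank-$68$ case $m = 360$.
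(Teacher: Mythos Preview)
Your proposal is a reasonable outline of how Usui's argument actually proceeds, but note that the paper itself does not give a proof of Theorem~\ref{USUI} at all: its ``proof'' consists entirely of pointers to the literature, assigning each $m \mid 360$ to the specific paper of Usui where it is treated (\cite{Usui2000} for most cases, \cite{Usui2001} for $\Ll_9$, \cite{Usui2006} for $m=18,30$, and \cite{Usui2008} for $m=12,24,60,90,120,180,360$). The theorem is being quoted, not proved.

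That said, your sketch is broadly faithful to what one finds in those references. A couple of remarks. For part~(2), your eigenspace/Hodge-theoretic argument is in the right spirit but is vague at the crucial step: the vanishing of the nontrivial $\mu_{m_2}$-eigenspaces is really a statement about the Picard number, and Usui (following Shioda) obtains $r_m$ via an explicit Lefschetz-number computation on the Delsarte/Fermat-type surface $\Sc_m$ rather than by an abstract ``incompatibility of congruences''; you would need to make that step precise. For part~(1), your inductive scheme is accurate, and you correctly identify the glue analysis (the overlap of the $\Ll_{m_1'}[m_2']$ and the appearance of the auxiliary lattices $\Hc_n$) as the technical heart; this is indeed handled case by case in Usui's papers and is not something one can compress into a uniform argument.
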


\begin{proof}  
	 Except for the cases $m=12, 18, 24, 30, 60, 90, 120, 180, 360$,
  the lattice structures $\Ll_m$ are given in \cite[Theorem 2]{Usui2000}.
 The case   $\Ll_9$ is treated in \cite [Corollary 1]{Usui2001}.
 The cases $m=18, 30$ are studied in \cite {Usui2006}. For $m=24, 60$, one can see   \cite[Section 2]{Usui2008}, and
 the cases $m=12, 90, 120, 180, 360$ as well as other data of $\Ll_m$ are treated in \cite[Section 4]{Usui2008}. 
\end{proof}

Let us consider the elliptic $K3$ surfaces   with generic fiber given by 
$ \displaystyle y^2=x^3 + t^n +  1/t^n $  and denote their lattices by $\Fc_n$ for $  1\leq n \leq 6$.
In \cite{Salami2022}, we have investigated the splitting field and independent generators of these $K3$ surfaces.
The following lattices
\begin{align*}
	\Hc_2 &= \left\lbrace  \left( t^2 x(t^3), t^3 y(t^3) \right)\ |\ \left(x(t), y(t)\right)\in \Fc_2 \right\rbrace,  \\
	\Hc_3 & = \left\lbrace  \left( t^2 x(t^2), t^3 y(t^2) \right)\ |\ \left(x(t), y(t)\right)\in \Fc_3 \right\rbrace,  \\
	\Hc_4 & = \left\lbrace  \left( t^4 x(t^3), t^6 y(t^3) \right)\ |\ \left(x(t), y(t)\right)\in \Fc_4 \right\rbrace,  \\
	\Hc_5 & = \left\lbrace  \left( t^{10} x(t^6), t^{15} y(t^6) \right)\ |\ \left(x(t), y(t)\right)\in \Fc_5 \right\rbrace,  \\
	\Hc_6 & = \left\lbrace  \left( t^{2} x(t), t^{3} y(t) \right)\ |\ \left(x(t), y(t)\right)\in \Fc_6 \right\rbrace,
\end{align*}
  are sublattices of $\Ll_{12}, \Ll_{18}, \Ll_{24}$ and $\Ll_{60}$, respectively, and we have
$$\Hc_2 \iso {\mathcal F}_2 [3], \ \Hc_3 \iso {\mathcal F}_3 [2],\ 
\Hc_4 \iso {\mathcal F}_4 [3], \ \Hc_5 \iso {\mathcal F}_5 [6],\  \text{and}\ \Hc_6 \iso \Fc_6.$$
Moreover, for a sublattice   $\Ll$ of $\Ll_{6n}$,   we define  the following lattice
$$\widetilde{\Ll} = \left\lbrace \widetilde{P} =(t^{2n} x(1/t), t^{3n} y(1/t)) \ | \ P=(x(t), y(t)) \in \Ll \subset \Ll_{6n} \right\rbrace,$$
which is isomorphic to $\Ll$ under the automorphism $\iota: P \mapsto \widetilde{P}$ on  $\Ll_{6n}$ such that $\iota (P)=P$.


\section{Proofs of Theorem~\ref{main-a} }
\label{case2-3}
In this section, we prove the Theorem \ref{main-a}  to determine the splitting field and generators of the simplest cases, say   $\Ee_2$ and   $\Ee_3$ as follows.

\subsection{Proof  of parts (i) and (ii)}


It is easy to check that the three obvious points $Q_j=(-\zeta_3^j, t)$ for $0, 1, 2$ belong $\Ee_2(\CC(t))$, 
but not in $\Ee_2(\CC(t)^\circ)$.
 Then, by addition law on $\Ee_2$, we  obtain  the point 
 $$  Q_1-Q_2= \left(- \zeta_3 (1+\frac{4}{3} t^2),  \I \sqrt{3}\, t (1 + \frac{8}{9} t^2)\right) \in \Ee_2(\CC(t)^\circ).  $$
The gram matrix of the points $P_1=Q_1, P_2=Q_1-Q_2$  is equal to 
\begin{equation}\label{mat2}
\M_2=\frac{1}{3}
\begin{pmatrix}
	2 & -1 
	\\ \noalign{\medskip}
	-1 & 2 
\end{pmatrix},
\end{equation}
with determinant $\det (\M_2)=2/3 $ as desired.
Thus, the splitting field  of $\Ee_2$ is equal to $\KK_2=\Q( \zeta_3).$

\subsection{Proof  of part  (iii)}

To simplify the computations, we change $t$ with 
$t-1$, to get the elliptic surface
$$\Ee'_3: y^2=x^3+ t (t^2-3 t +3),$$  which is  isomorphic to $\Ee_3$. The curve $\Ee'_3$ has a singular fiber of type $II$ at $t=0$,
 and the roots of $t^2-3 t +3$. The fiber at $t=\infty$
is of type $I_0^*$. Thus,  
$\Ee_3(\CC(t)) \iso \Ee'_3(\CC(t)) \iso D_4^*, $ as it is mentioned in Theorem~\ref{invariants}.
The generators of $\Ee'_3(\CC(t)) $ are of the form  $P=(a t+ b, ct+ d).$
Substituting the coordinates of $P$ in   the equation of $\Ee'_3,$ we obtain
\begin{equation}
	\label{eq3-1}
a^3+1=0, \ c^2= 3 a^2 b - 3, \ 2 cd = 3 a b^2 +3, \  b^3=d^2.
\end{equation}
Letting  $u=\text{sp}_{t=0}(P)=b/d$ and using   $b^3=d^2$,
 we have $b=u^{-2}$ and $d=u^{-3}.$ Then, using the second and third equations, one can get  
 the fundamental polynomial  $\Phi_3(u)  =27 u^{24}+108 u^{18}-126 u^{12}-8 u^{6}-1.$
Taking $U=u^6$, we write  $\Phi_3(U)  =27 U^{4}+108 U^{3}-126 U^{2}-8 U-1,$ which can be 
decomposed as follows 
$$\Phi_3(U) =27 (U-1)   \left( U + \frac{ (2^{\frac{2}{3}}+1 )^6 }{27} \right) 
\left( U + \frac{ (\zeta_3\, 2^{\frac{2}{3}}+1 )^6 }{27} \right) 
\left(  U +  \frac{ (\zeta_3^2\, 2^{\frac{2}{3}}+1 )^6 }{27}\right). $$
Thus, the splitting field  $\KK_3$ of $\Phi_3(U)$  and hence  $\Phi_3(u)$ contains  $\Q(\zeta_3, 2^{\frac{1}{3}})$. 
Using Maple, one can factor  the fundamental polynomial as follows:
\begin{align*}
\Phi_3(u)	& = (u - 1)(u + 1)(u^2 + u + 1)(u^2 - u + 1)(3u^6 + 3u^4 - 3u^2 + 1)\\
	& \quad \times (3u^6 - 9u^5 + 12u^4 - 9u^3 + 6u^2 - 3u + 1) \\
	&\quad  \times (3u^6 + 9u^5 + 12u^4 + 9u^3 + 6u^2 + 3u + 1)
\end{align*}
	Using Pari/GP and the factors of $\Phi_3(u)$, we find a defining minimal polynomial of degree 6 for  the splitting field $\KK_3$ given by $g_3(x)=x^6 - 3x^5 + 5x^3 - 3x + 1.$

For $U=1$, letting $u_1=-\zeta_3$  and     $a=-\zeta_3^2$,  after changing $t$ with $t+1$, 
one obtains the point
$P_1= \left(-\zeta_3^2 t, 1 \right)$ in $\Ee_3(\KK_3(t))$. 
Taking $u_2=1$, $a=-1$, and using the fact  $c=(3 u^{4}-3)/2u$ and changing $t$ with $t+1$ leads to the second point  	$P_2 =\left(-t, 1 \right)$ in  $\Ee_3(\KK_3(t))$. 
Indeed,  there are  eight points with $a=-1$ and $c=(3 u^{4}-3)/2u$ determined by the roots of the factors
$(u-1)(u+1)(3u^6 + 3u^4 - 3u^2 + 1) $  of $\Phi_3(u)$. 
These points generate a sublattice of index $2$ in $ \Ee_3(\KK_3(t))$ and has
gram matrix equal to the unit matrix of degree four. 
Considering  the following two roots
 $$\displaystyle
 u_3= \frac{1}{3}(2\zeta_3+1)(2^{\frac{2}{3}}   + 1), \ 
 u_4=-\frac{1}{3}(2\zeta_3+1)(\zeta_3\, 2^{\frac{1}{3}}   + 1),$$
 we get the  more   two  points in   $ \Ee_3(\KK_3(t))$ as follows
$$
P_3 =\left( -( t +  2^{\frac{2}{3}}) ,  -(2\zeta_3+1)
(2^{\frac{1}{3}} t +1) \right),\
P_4  = \left(-(t +   \zeta_3^2\, 2^{\frac{2}{3}}), 
  (\zeta_3^2-1)\,2^{\frac{2}{3}} t + 2 \zeta_{3}  +1\right).
$$

One can easily check that the Gram matrix of   $P_1, \cdots, P_4$ is 
\begin{equation}\label{ma4}
	\M_3=\frac{1}{2}
\begin{pmatrix}
	2 & 1 & 1 & 1
	\\ \noalign{\medskip}
	1 & 2 & 0 & 0
	\\ \noalign{\medskip}
	1 & 0 & 2 & 0
	\\ \noalign{\medskip}
	1 & 0 &0 & 2
\end{pmatrix},
\end{equation}
which has determinant $1/4$ as desired.
Thus, we have completed the proof of part  (iii) in Theorem~\ref{main-a}.

\section{Proof of Theorem~\ref{main-c}}
\label{case4}
By Theorem~\ref{invariants},  the Mordell--Weil lattice of $\Ee_4: y^2=x^3+ t^4+1$ over $\CC(t) $ is isomorphic to
$E_6^*$. 
Thus,  a set of generators can be found between $56$ rational points of the form
\[ P=\left( a t +b, \pm t^2 + c t + d\right),\]
with coefficients 
$a, b, c, d \in \KK_4$, where $\KK_4 \subset \CC$ is the splitting field of $\Ee_4$ and  will be determined below.
 
Considering only the positive sign in the $y$-coordinates  and substituting in the equation of $\Ee_4$, leads to the following:
$$2 c -a^{3}=0, \ \  2 c d -3 a \,b^{2}=0, \ \ d^{2}-b^{3}=1, \ \ c^{2}+2 d -3 a^{2} b=0.$$
We consider the specializing map 
$\text{sp}_{\infty}: \Ee_4(\KK_4(t)) \rightarrow \KK_4,$ 
given by 
$ \text{sp}_{\infty} (P)\colonequals a  $ for $P \in \Ee_4(\KK_4(t)).$
 Resolving the above equations leads to a polynomial of degree $27$ in terms of $a$ and hence the fundamental polynomial is equal to 
\begin{align}
	\Phi_4(a)&=  a^3 (a^{24} + 17280 a^{12} - 110592)\notag  \\
	&\quad  = a^3   (a^{8}+24 a^{4}-48) \notag \\
	&\quad \times (a^{16}-24 a^{12}+624 a^{8}+1152 a^{4}+2304).\label{phi4}
\end{align}
The splitting field $\KK_4$ is equal to the compositum of  fields defined by factors of $\Phi_4$.
Using Pari/GP, we find   a defining  minimal polynomial of $\KK_4$ as follows:
\begin{align}
	g_4(x) &= x^{16} - 8x^{15} + 38x^{14} - 120x^{13} + 272x^{12}  - 436x^{11}\notag \\
	&\quad + 472x^{10} - 264x^{9} - 62x^{8} + 216x^{7}  - 128x^{6} - 8x^{5} \notag  \\
	&\quad + 56x^{4} - 48x^{3} + 32x^{2}- 16x + 4.  \label{g4}
\end{align}
Taking $a=0$ in the above  equations leads to $c=d=0$ and $b^3+1=0$. Thus,  one obtains three points of the form $(b,  t^2)$ with $b=-1, \ -\zeta_3, \ -\zeta_3^2$ which span a sublattice of rank $2$.
	We define the points $P_1=(-1, t^2)$,  $P_2=(- \zeta_3, t^2)$ corresponding to $b=-1$ and $-\zeta_3.$
	
Fixing a $24$-th root of uinty $\zeta_{24}=  \sqrt{2}\,(1+\I)(\I-\sqrt{3})/4$, one can decompose  the second factor  of    $\Phi_4(a)$ as follows
$$a^{24} + 17280 a^{12} - 110592=\prod_{j=0}^{11}\left( a- \zeta_{24}^{2j}\, 2^{\frac{1}{4}} 3^{\frac{1}{8}} \sqrt{\sqrt{3}-1}\right) 
\left( a- \zeta_{24}^{2j+1}\, 2^{\frac{1}{4}} 3^{\frac{1}{8}} \sqrt{\sqrt{3}+1}\right).$$
%



Now, to complete the list of generators, we let $\zeta_6=(1 +\I \sqrt{3})/2$ denotes a  $6$-th root of unity and set
$\alpha_1=   2^{\frac{1}{4}} 3^{\frac{1}{8}} \sqrt{\sqrt{3}-1}$.
 Then, we 
 consider the following   points:
  \begin{align}
  	\label{points4}
	P_1& =(-1,t^2), \   \ \ 
	 P_2  =(-\zeta_3,t^2), \\  \notag
	P_3& =\left( \alpha_1 t + (\sqrt{3}-1), \,  t^2+ \frac{\alpha_1^3}{2}\, t + \frac{\sqrt{3} \alpha_1^2}{2} \right), \\  \notag
	P_4& =\left( \zeta_{12}\, \alpha_1 t +  \zeta_3\,(\sqrt{3}-1), \,  t^2+ \frac{\I \, \alpha_1^3}{2}\, t - \frac{\sqrt{3} \alpha_1^2}{2} \right), \\  \notag
	P_5& =\left( \zeta_{6}\, \alpha_1 t +  \zeta_3^2\,(\sqrt{3}-1), \,  t^2- \frac{\alpha_1^3}{2}\, t + \frac{\sqrt{3} \alpha_1^2}{2} \right),\\  \notag
	P_6& =\left( \I\, \alpha_1 t + \sqrt{3}-1, \,  t^2- \frac{\I \, \alpha_1^3}{2}\, t - \frac{\sqrt{3} \alpha_1^2}{2} \right). 
\end{align}

 We note that $\left\langle P_i, P_i\right\rangle=4/3 $ for  $i=1,\cdots,6$. Denoting $x_{ij}=x(P_i)-x(P_j)$ and $ y_{ij}=y(P_i)-y(P_j)$ for
 $ 1\leq  i \neq  j \leq  6$,  the pairing  $\left\langle P_i, P_j\right\rangle  $   can be calculated by 
\begin{align*}  
   \left\langle P_i, P_j\right\rangle  & =\frac{1}{3} - 
   \left\lbrace  \deg \left( \gcd  (x_{ij} , y_{ij})  \right)    
   + \min\left\lbrace 1- \deg(x_{ij}), 2- \deg(y_{ij}))\right\rbrace \right\rbrace.
\end{align*} 
Thus,  the Gram matrix of these six points $P_1, \cdots,P_6$ is equal to the following matrix,
\begin{equation}\label{ma4}
	\M_4=\frac{1}{3}
\begin{pmatrix}
4 & -2 & -2 & 1 & 1 & -2 
\\
-2 & 4 & 1 & -2 & 1 & 1 
\\
-2 & 1 & 4 & -2 & 1 & 1 
\\
1 & -2 & -2 & 4 & -2 & 1 
\\
1 & 1 & 1 & -2 & 4 & -2 
\\
-2 & 1 & 1 & 1 & -2 & 4 
\end{pmatrix},
\end{equation}
whose determinant is equal to $1/3$ as desired for any  lattice isomorphic to $E_6^*$.
 This completes the proof of  Theorem \ref{main-c}. 

\section{Proof of Theorem~\ref{main-d}}
\label{case5}
	Since $\Ee_5 (\CC(t))$ is isomorphic to the root lattice $E_8$, there are $240$ minimal sections corresponding to the points of following form:
	$$P=\left( \frac{t^2+a t + b}{u^2},\frac{t^3+ c t^2 + d  t +3}{u^3} \right).$$
	Putting the coordinates of $Q$ into the equation of $\Ee_5$  and letting $U=u^6$, one  obtain  the following relations among the coefficients:
	
	\begin{align}
		U + 3a - 2c &= 0, & 3a^2 + 3b - 2d - c^2 &= 0, & a^3 + 6ba - 2e - 2dc &= 0, \notag \\
		3b^2 + 3a^2b - 2ec - d^2 &= 0, & 3b^2a - 2ed &= 0, & U + b^3 - e^2 &= 0. \label{eq0-5-1}
	\end{align}

	%
	Using {\sf{Maple}},  we obtain a polynomial $\Phi_5(U)$ of degree $40$ of $U$ which is equal to 
the product of the following polynomials up to a constant, 
	\begin{align*}
		\Phi'_5(U)&=U^{20}-135432000U^{15}+56473225380000U^{10}+2176717249713600000U^5+583200000,\\
		\Phi''_5(U) 	&=U^{20}+66081312000U^{15}-4811512860000U^{10}+1167566400000U^5+583200000.
	\end{align*}
	Changing the variable $ V=U^5/60$,    gives us the following  polynomials 
$$F'(V)=V^4-2257200 V^3+15687007050 V^2+10077394674600 V+45,$$
		$$ F^{''}(V)  =V^4 + 1101355200 V^3 - 1336531350V^2 + 5405400 V + 45,$$
	 corresponding to $\Phi'_5(U)$ and $\Phi''_5(U)$, respectively. The first one
has the following roots
	\begin{equation}
		\label{Zi-1}
		\begin{aligned}
			v_1&=564300+252495\,\sqrt {5}+31\,\sqrt {654205350+292569486\,\sqrt {5}},\\
			& = 564300+ 252495 \sqrt{5} + \delta_1 (76074 \sqrt{5} +170252), \ 
			 \delta_1 = 2^{\frac{1}{2}}\, 3^{\frac{1}{2}} 5^{\frac{1}{4}}  (1+\sqrt{5})^{\frac{1}{2}}/2, \\
			v_2&=564300+252495\,\sqrt {5}-31\,\sqrt {654205350+292569486\,\sqrt {5}}
			= v_1^\sigma, \\
			v_3&=564300-252495\,\sqrt {5}+31\,\sqrt {654205350-292569486\,\sqrt {5}}=v_1^\tau, \\
			v_4&=564300-252495\,\sqrt {5}-31\,\sqrt {654205350-292569486\,\sqrt {5}}=v_1^{\sigma \tau},
		\end{aligned}
	\end{equation}
where   $\sigma$ and $\tau$ denote the  automorphisms which change the sign of  $\sqrt{3}$ and  $\sqrt{5}$, respectively. Similarly, 
 the roots of second one  are as follows:
	\begin{align*}
		v_5&=-275338800+123135255\,\sqrt {5}-31\,\sqrt {157776180962550-70559653172514\,\sqrt {5}},\\
		& =-275338800 + 123135255\, \sqrt{5} + \delta_1 (76074 \sqrt{5} +170252), \\
		 v_6&=-275338800+123135255\,\sqrt {5}+31\,\sqrt {157776180962550-70559653172514\,\sqrt {5}}=  v_5^\sigma,\\ v_7&=-275338800-123135255\,\sqrt {5}+31\,\sqrt {157776180962550+70559653172514\,\sqrt {5}}= v_5^\tau, \\ v_8&=-275338800-123135255\,\sqrt {5}-31\,\sqrt {157776180962550+
			70559653172514\,\sqrt {5}}=v_5^{\sigma \tau}.
	\end{align*}

Moreover, one can check that the ratio of any two roots of $F'(V)$ and $F''(V)$ are units in 
	$\Q(\sqrt{5}, \delta_1) \subset \Q(\zeta_{30})$, see the subsection~4.9 in \cite{Shioda1999} for more details.	
	Thus, all the $240$ roots of fundamental polynomial $\Phi_5(u)$ are of the form 
	$u=\zeta_{30}^\ell (60 v_j)^{1/30}$  for some  $1\leq j \leq 8$ and $0\leq \ell \leq 29$,
	 where we keep the notations given before the statement of Theorem~\ref{main-d}.
	Therefore, one can  conclude that  $\KK_5=\Q(\zeta_{30}) \left( (60 v_1)^\frac{1}{30}\right)$. 
	 Using Pari/GP, we find a defining minimal polynomial for $\KK_5$  of degree 120 as given in \cite[minpols]{Shioda-Codes}.

	Using the $240$ roots $u$ of   $\Phi_5(u)$ and the relations~\ref{eq0-5-1}, one may obtain $240$ minimal sections of the form
	$$P=\left(\frac{t^2+a t + b}{u^2},\frac{t^3+ c t^2 + d  t  +3}{u^3}\right).$$
	By searching among them, we have found  the eight roots $u_1, \ldots, u_8$ given by
	\begin{equation}
		\label{u-0-8}
		\begin{aligned}
			u_1 & =  \zeta_{30}^2(60 v_1^{\tau})^\frac{1}{30}, &  
			u_2 & = (60 v_1)^\frac{1}{30}, &
			u_3 & =  (60 v_2^\tau)^\frac{1}{30}, & 
			u_4 & =  (60 v_2^{\sigma \tau})^\frac{1}{30}, \\
			u_5 & =  \zeta_{30}^{16} (60 v_1^{\tau})^\frac{1}{30},  & 
			u_6 & =  \zeta_{30}^{2} (60 v_1^{\tau})^\frac{1}{30}, & 
			u_7 &=  \zeta_{30}^{16} (60 v_2)^\frac{1}{30},  &  
			u_8 &=  \zeta_{30}^{2} (60 v_2)^\frac{1}{30},\\
		\end{aligned}
	\end{equation}
	such that their corresponding sections $P_1, \ldots, P_8$ 
	form an independent subset of  $\Ee_5(\KK_5(t))$.
	 
	 Since the exact values of $u_i$'s and the coefficients of points $P_i$ for $i=1,\cdots, 8$ are so huge complex numbers,  we have provided them in 
	 \cite[points-5]{Shioda-Codes}.

We have  $\left\langle P_i, P_i\right\rangle=2 $ for  $i=1,\cdots,8$, and  letting $x_{ij}=x(P_i)-x(P_j)$ and $ y_{ij}=y(P_i)-y(P_j)$ for
$ 1\leq  i \neq  j \leq 8$,  the pairing  $\left\langle P_i, P_j\right\rangle  $   can be calculated by 
\begin{align*}  
	\left\langle P_i, P_j\right\rangle  & =1 - 
	\left\lbrace  \deg \left( \gcd  (x_{ij} , y_{ij})  \right)    
	+ \min\left\lbrace 2- \deg(x_{ij}), 3- \deg(y_{ij}))\right\rbrace \right\rbrace.
\end{align*} 
Hence,	the   gram matrix of these points is the following unimodular matrix,
\begin{equation}
	\M_5 =
	\begin{pmatrix}
		\label{m2}
	2 & 1 & 1 & 1 & 0 & 0 & 0 & 1 
	\\ \noalign{\medskip}
	1 & 2 & 0 & 1 & 1 & 1 & 0 & 1 
	\\ \noalign{\medskip}
	1 & 0 & 2 & 1 & 0 & -1 & 1 & 0 
	\\ \noalign{\medskip}
	1 & 1 & 1 & 2 & 1 & 0 & 1 & 0 
	\\ \noalign{\medskip}
	0 & 1 & 0 & 1 & 2 & 0 & 1 & 0 
	\\ \noalign{\medskip}
	0 & 1 & -1 & 0 & 0 & 2 & -1 & 1 
	\\ \noalign{\medskip}
	0 & 0 & 1 & 1 & 1 & -1 & 2 & -1 
	\\ \noalign{\medskip}
	1 & 1 & 0 & 0 & 0 & 1 & -1 & 2 	
	\end{pmatrix},
\end{equation}
as desired. Therefore, we have completed the proof of Theorem~\ref{main-d}.

\section{Proof of Theorem~\ref{main-e}}\label{sec-6}
\label{case6}
Since  $\Ee_6(\CC(t))$ is isomorphic to $E_8$ and its
 the $240$ minimal roots    correspond to  $ 240$ points in $  \Ee_6(\CC(t))$ of the form:
\begin{equation}
		\label{eeqq-0}
P=\left(at^2+bt+g, ct^3+dt^2+et+h\right),
\end{equation}
for suitable constants $a, b, c, d, e, h, g \in \CC$. 

In order to ease the computations, we consider the following elliptic surface
$\Ee_6^-:   y^2=x^3+ t^6-1. $
Indeed, the map $\phi: \Ee_6^- (\CC(t))  \rightarrow \Ee_6 (\CC(t))  $ defined by  $(x,y,t)\mapsto (-x,y/\I, t/\zeta_6)$ 
is an isomorphism and  its inverse map is 
 $\phi^{-1}: \Ee_6 (\CC(t)) \rightarrow \Ee_6^- (\CC(t)) $ defined by  $(x,y,t)\mapsto (-x, \I y,  \zeta_6 t)$.

Changing   the variable $t'=t-1$, we assume that any  rational point on $\Ee_6^-$ are of the form:
\begin{equation}
	\label{eeqq-1}
	P'=\left(a' {t'}^2+b'  {t'} + g', c' {t'}^3 + d' {t'}^2+e' {t'} +h'\right).
\end{equation}
Then,  substituting the coordinates of $P'$    into  equation  of  $\Ee_6^- $ 
 and letting $g' = u^{-2}, h' = u^{-3}$, we get  the following six relations:
\begin{equation}
	\label{eeqq6}
	\begin{cases}
		a'^3-c'^2+1=0,  \\
		3 a'^{2} b' -2 c' d' -6 =0, \\
		3 a' \,b'^{2} u^{2}-2 c' e' \,u^{2}-d'^{2} u^{2}+3 a'^{2}+15 u^{2}=0, \\
		b'^{3} u^{3}-2 d' e' \,u^{3}+6 a' b' u -20 u^{3}-2 c'=0, \\
		e'^{2} u^{4}-3 b'^{2} u^{2}-15 u^{4}+2 d' u -3 a'=0, \\
		6 u^{4}+2 e' u' -3 b'= 0,
\end{cases}\end{equation}
which give rise to a degree $240$ fundamental polynomial $\Phi_6 (u)$ with integral coefficients. 
In \cite[Thm. 1]{Shioda1999a}, T.~Shioda   proved that   $\Phi_6 (u)$ can be decomposed over  $\Q(\zeta_{12})\left(2^{\frac{1}{3}} \right)$ in to linear factors.
Thus, the splitting field of     elliptic surfaces $\Ee_6^-$ and hence $\Ee_6$ is 
$\KK_6=\Q(\zeta_{12})\left(2^{\frac{1}{3}} \right)$.
Using Pari/GP, we find a defining minimal polynomial for $\KK_6$ as follows:

\begin{align}
g_6(x)	& =x^{12} - 3x^{10} - 8x^9 - 6x^8 + 12x^7 + 47x^6 \nonumber \\
	& \quad + 78x^5 + 78x^4 + 50x^3 + 21x^2 + 6x + 1\label{g6}
\end{align}

In fact, by taking $U=u^{12}$, one gets a polynomial  $\Phi_6 (U)$ of degree 20 which  decomposes into six irreducible factors in $\Z[U]$, namely,	
$\Phi_6 (U) = \prod_{i=1}^{6} \Phi_{6, i}(U)$
where
\begin{align*}
	\Phi_{6, 1}(U) &= U-1, \ \ \ \
	\Phi_{6, 2}(U)  = 4U+1,\\
	\Phi_{6, 3}(U) &= 729U^3-17739U^2-189U-1,\\
	\Phi_{6, 4}(U) &= 46656U^3+3888U^2+1728108U+1, \\
	\Phi_{6, 5}(U) &= 2176782336U^6+49703196672U^5+4643867821824U^4-606248250624U^3\\
	&\quad +273143664U^2-43848U+1,\\
	\Phi_{6, 6}(U) &= 2176782336U^6+766590179328U^5+870778439424U^4+333394631424U^3\\
	&\quad -2638190736U^2+99673848U+1. \end{align*}
In \cite[Thm. 1]{Shioda1999a}, it is showed that
the roots of  polynomials $\Phi_{6, i} (U)$ for $i=1,\ldots,6$ are all $12$-th power of some elements in 
$\KK_6 $. Thus, any root of the fundamental polynomial  $\Phi (u)$ is of the form $\zeta_{12}^{i} U_\ell^{\frac{1}{12}}$ for $i=0,\cdots, 11$ and
$\ell=1,\cdots 20$, where $U_j$ is a root of   $\Phi_6 (U)$.
Taking $U_1=1$, $U_2=-1/4$, and considering  two roots $U_3, U_4$ of $\Phi_{6, 3}(U)$,  one  root $U_5$ of   $\Phi_{6, 4}(U)$,  two roots $U_6$ and $U_7$ of  $\Phi_{6, 5}(U)$, and  finally one   root  $U_8$  of $\Phi_{6, 6}(U)$, we obtained the followings:
\begin{align*}
u_1&=1, \ \ \ \ 
u_2 =
\left( \frac{-1}{4}\right)^{\frac{1}{12}}, \\
 u_3&=\left(\frac{46 2^{\frac{2}{3}}+58  2^{\frac{1}{3}} +73}{9} \right)^{\frac{1}{12}}=\frac{1+ 2^{\frac{1}{3}}}{\sqrt{3}},\\
 u_4&=\left( \frac{46 {\zeta^1_3} 2^{\frac{2}{3}}+58 \zeta_3  2^{\frac{1}{3}} +73}{36}\right)^{\frac{1}{12}}=\frac{1+\zeta_3  2^{\frac{1}{3}}}{\sqrt{3}},\\
 u_5&= \left( \frac{ 80  \zeta_3  \,2^{\frac{2}{3}}-100  {\zeta^2_3}  2^{\frac{1}{3}} -1}{36}\right)^{\frac{1}{12}}=\frac{(\I -1)( 2-\zeta_3^2  2^{\frac{1}{3}})}{2 \sqrt{3}}\\
u_6&= \left( \frac{\left(337+194 \sqrt{3}\right) 2^{\frac{2}{3}}-\left(314+182 \sqrt{3}\right)  2^{\frac{1}{3}} -(137+80 \sqrt{3})}{36}\right)^{\frac{1}{12}}=
\frac{(1+\sqrt{3})  2^{\frac{1}{3}} -2}{2 \sqrt{3}}\\
u_7&= \left( \frac{\left(337-194 \sqrt{3}\right) 2^{\frac{2}{3}}-\left(314-182 \sqrt{3}\right)  2^{\frac{1}{3}} -(137-80 \sqrt{3})}{36}\right)^{\frac{1}{12}}=\frac{(1-\sqrt{3} )  2^{\frac{1}{3}} -2}{2 \sqrt{3}},\\
u_8	&= \left( -\frac{\left(1327+766 \sqrt{3}\right) 2^{\frac{2}{3}}+2 \left(833+481 \sqrt{3}\right)  2^{\frac{1}{3}} +2113+1220 \sqrt{3} }{36}\right)^{\frac{1}{12}} 
=\frac{(\I+1)(1+\sqrt{3} +  2^{\frac{1}{3}})}{2 \sqrt{3}}
\end{align*}

Letting  $g'_j = u_j^{-2},  h'_j = u_j^{-3}$  
and solving the equations  \ref{eeqq6} leads to  eight points $P'_j$  of the form \ref{eeqq-1} in $\Ee_6^- (\KK_6(t'))$    for $j=1,\cdots 8$.
Then, changing $t'$ with $t-1$ and using the map $\phi$, one can get the following eights points 
generating $\Ee_6 (\KK_6(t))$:
\begin{align*}
P_1& =\left(-1, -t^3 \right), \ \ \ 
 P_2=\left(- 2^{\frac{1}{3}} t , -t^3 +1\right), \\
P_3& =\left( -(\zeta_3^2 t^2+   2^{\frac{2}{3}}),  -\I \, \sqrt{3}\,( \zeta^{2}_{3}\, 2^{\frac{1}{3}} t^2   +1  )\right),\\ 
P_4&=\left(-(t^2+ 2^{\frac{2}{3}}) , \I \,  \sqrt{3}\,( 2^{\frac{1}{3}} t^2 +   1) \right),\\
P_6&=\left( 2 \,\zeta_3^2 t^2-\sqrt{3}\, \zeta_{12}\, 2^{\frac{1}{3}} t - 2^{\frac{2}{3}},
-3 t^3-2\, \sqrt{3}\, \zeta^{-1}_{12}\, 2^{\frac{1}{3}} t^2+3 \,  \zeta_3\, 2^{\frac{2}{3}} t+\I \, \sqrt{3}\right),\\
P_7&=\left(2 \,\zeta_3^2 t^2+\sqrt{3}\, \zeta_{12}\, 2^{\frac{1}{3}} t- 2^{\frac{2}{3}}, 
-3t^3+2\, \sqrt{3}\, \zeta^{-1}_{12}\, 2^{\frac{1}{3}} t^2+3 \,  \zeta_3\, 2^{\frac{2}{3}} t- \I \, \sqrt{3} \right),\\
\end{align*}
and $P_j=\left(a_j t^2+b_j t+g_j, c_j t^3+d_j t^2+e_j t+h_j\right)$ for $j=5,$ and 8 have
 coefficients  as follows:

\begin{align*}
	a_5 &= -\left(\zeta_3\,  2^{\frac{2}{3}} +    2\, \zeta_3^2 \,2^{\frac{1}{3}} +2\right), &
	a_8 & = -\zeta^2_3\,\left( 2^{\frac{2}{3}} +    2\,   \,2^{\frac{1}{3}} +2\right),  \\
	b_5 &= 2\, 2^{\frac{2}{3}} + 3\,  \zeta_3\,  2^{\frac{1}{3}} + 4\, \zeta_3^2,  & 
	b_8 & =  - \left(2 \, 2^{\frac{2}{3}} + 3 \,2^{\frac{1}{3}} +4\right), \\
	g_5 &= -\left(\zeta_3^2\,  2^{\frac{2}{3}} +    2\, 2^{\frac{1}{3}} +2\, \zeta_3\right), & 
	g_8 & = -\zeta_3 \,\left( 2^{\frac{2}{3}} +    2\,   \,2^{\frac{1}{3}} +2\right), \\
	c_5 &=-2 \sqrt{3}\, \left(\zeta_{12}\,  2^{\frac{2}{3}} - \zeta_{12}\, 2^{\frac{1}{3}} -\frac{3}{2} \I \right),
	& c_8 & = -2 \, \I \, \sqrt{3}\, \left( 2^{\frac{2}{3}} +  2^{\frac{1}{3}} +\frac{3}{2} \right), \\
	d_5 &= - \sqrt{3}\, \left(5 \I \,  2^{\frac{2}{3}} - 6 \zeta_{12}\, 2^{\frac{1}{3}} +8\, \zeta^{-1}_{12} \right), & 
	d_8 & = - \sqrt{3}\,\zeta_{12}  \left(5 \,  2^{\frac{2}{3}} + 6 \, 2^{\frac{1}{3}} +8 \right), \\
	e_5 &= - \sqrt{3}\, \left(5 \, \zeta^{-1}_{12} 2^{\frac{2}{3}} - 6\,\I 2^{\frac{1}{3}} -8\, \zeta_{12} \right), & 
	e_8 & =   \sqrt{3}\,\zeta^{-1}_{12}  \left(5 \,  2^{\frac{2}{3}} + 6 \, 2^{\frac{1}{3}} +8 \right), \\
	h_5 & =   2\, \sqrt{3}\, \left(\zeta_{12} 2^{\frac{2}{3}} +\zeta^{-1}_{12} 2^{\frac{1}{3}} -\frac{3}{2} \I  \right),  & 
	h_8 & = 2 \, \I \, \sqrt{3}\, \left( 2^{\frac{2}{3}} +  2^{\frac{1}{3}} +\frac{3}{2} \right),
\end{align*}

It is easy to see that the set   $\{u_1, u_2, \cdots u_8\}$ is an independent set of elements in $\KK_6$, and hence   ${P_1, \cdots, P_8}$ is an independent set in $\Ee_6 (\KK_6(t))$. In a same way as in the previous section,  one can see that the  
Gram matrix  is   the following unimodular matrix:
\begin{equation}\label{mat6}
	\M_{6}=
	\begin{pmatrix} 
		2 & 0 & 0 & 0 & 1 & 0 & 0 & 1 
		\\ \noalign{\medskip}
		0 & 2 & 1 & 0 & 1 & 1 & 0 & 0 
		\\ \noalign{\medskip}
		0 & 1 & 2 & 1 & 1 & 0 & 0 & 1 
		\\ \noalign{\medskip}
		0 & 0 & 1 & 2 & 1 & 0 & 0 & 1 
		\\ \noalign{\medskip}
		1 & 1 & 1 & 1 & 2 & 0 & 0 & 1 
		\\ \noalign{\medskip}
		0 & 1 & 0 & 0 & 0 & 2 & 0 & 0 
		\\ \noalign{\medskip}
		0 & 0 & 0 & 0 & 0 & 0 & 2 & 1 
		\\ \noalign{\medskip}
		1 & 0 & 1 & 1 & 1 & 0 & 1 & 2 
	\end{pmatrix}. 
\end{equation}
Therefore, we have completed the proof of Theorem~\ref{main-e}. We refer the reader to  
\cite[check-6, points-6]{Shioda-Codes}, for details of  computations in this section.

\section{Proof of Theorem~\ref{main-8}}
\label{case8}
Thanks to Table~\ref{tabusui}, the  lattice $\Ll_8$ is equal to $\Ll_4[2]$, i.e.,   
it is  isomorphic to  
$\Ee_4(\CC(t^2)) \iso \Ee_4(\KK_4 (t^2))$.
Its rank   is $r_8=6$ where  $\KK_8 =\KK_4$ and the six 
independent generators $Q_j$ are obtained  by changing $t$ with $t^2$ in the coordinates of  $P_1,\cdots,P_6$ given by \ref{points4}, i.e., 
$$Q_j   =\left( a_j t^2 + b_j, \ t^4 + c_j t^2+ d_j\right), \ \ j=1, \ldots, 6.$$
Using the base change property of the height pairing, one can get the Gram matrix of these points as
\begin{equation}\label{ma4}
	\M_8=\frac{2}{3}
	\begin{pmatrix}
		4 & -2 & -2 & -2 & -2 & 1 
		\\ \noalign{\medskip}
		-2 & 4 & 1 & 1 & 1 & 1 
		\\ \noalign{\medskip}	
		-2 & 1 & 4 & 1 & 1 & -2 
		\\ \noalign{\medskip}	
		-2 & 1 & 1 & 4 & 1 & -2 
		\\ \noalign{\medskip}
		-2 & 1 & 1 & 1 & 4 & 1 
		\\ \noalign{\medskip}
		1 & 1 & -2 & -2 & 1 & 4 
	\end{pmatrix},
\end{equation}
with determinant  $\det(\M_8)=2^6/3$ as in  Table \ref{tabusui}.
Therefore, we have finished the  proof   of Theorem~\ref{main-8}. 
\section{Proof of Theorem \ref{main-f}}
\label{case9}
In order to prove Theorem \ref{main-f} on $\Ee_9$, we change $t$ with $t-1$  and consider the isomorphic surface
$$\Ee'_9: y^2 = x^3+ (t-1)^9+1.$$
By a same argument as \cite[Lemma 10.9]{Shioda1990a}, the minimal sections of $\Ee'_9(t)$ are of the form $Q=\left( x(t), y(t)\right),$ with
	\begin{align}
		x(t)&= a_0 + a_1 (t-1)+a_2 (t-1)^2+ a_3 (t-1)^3,  \notag \\ 
		y(t)&= b_0 + b_1 (t-1)+b_2 (t-1)^2+ b_3 (t-1)^3+b_4 (t-1)^4+ b_5 (t-1)^5. 	\label{eq9-0}
\end{align} 
Substituting \ref{eq9-1} in the defining equation of $\Ee'_9$ and looking at the coefficients of $(t-1)^j$ for $j=0, \ldots, 5$ we have $b_5=0$ and the following:
\begin{equation}
	\label{eq9-1}
	\begin{cases}
		a_0^3=b_0^2, \ \ \   
		3 a_0^2 a_1 + 9 = 2 b_0 b_1, \\ \noalign{\medskip}
		3(a_0^2 a_2 + a_0 a_1^2) -36 =  b1^2 + 2 b_0 b_1 , \\   \noalign{\medskip}
		3 a_0^2 a_3+ 6 a_0 a_1 a_2 + a_1^3+84 =  2(b_1 b_2+ b_0 b_3), \\ \noalign{\medskip}
		6 a_0 a_1 a_3+ 3 a_0 a_2^2 + 3 a_1^2 a_2 -126 = b_2^2 + 2 (b_0 b_4+ b_1 b_3), \\ \noalign{\medskip}
		6 a_0 a_2 a_3+ 3  a_1^2a_3 + 3 a_1^2 a_2 -126 = 2 (b_1 b_4+ b_2 b_3), \\ \noalign{\medskip}
		3 a_0 a_3^2+ 6  a_1 a_2 a_3 + a_2^3-84 =  2b_2 b_4+b_3^2, \\ \noalign{\medskip}
		3(a_1 a_2^2 +  a_2^2 a_3) +36 =   2 b_3 b_4, \\ \noalign{\medskip}
		3 a_2 a_3^2 - 9 = b_4^2, \ \  \
		a_3^3 +1 =0.
\end{cases}\end{equation}

Let $u=\text{sp}_1(P)= a_0/b_0$. By the first equation of \ref{eq9-1}, we have $a_0=u^{-3}$ and $b_0=u^{-2}$.
Using {\sf{Maple}}, one can see that the fundamental polynomial determined by  above equations with respect $u$ is  a polynomial   $\Phi(u)$ of degree $240$, see \cite[check-9]{Shioda-Codes}
The splitting field $\KK_9$ is determined by the roots of this polynomial. Using Pari/GP, we find a defining minimal polynomial $g_9(x)$ of degree 54 for $\KK_9$, see \cite[minpols]{Shioda-Codes}

For a point $P$ determined by   roots of $\Phi(u)$, one can get  more five points in $\Ee_9(\KK_9(t))$ as
$P, \zeta_3 P, \  \zeta_3^2 P, \ -P, \  -\zeta_3 P, $ and $  -\zeta_3^2 P.$
Here,  we have  $\zeta_3 P = (\zeta_3 x (t), y(t))$ and  $\zeta_3^2 P = (\zeta_3^2 x (t), y(t))$.
Since $\zeta_3^2 P= -P  -\zeta_3 P$, these  points are described by $P$ and $\zeta_3 P$. 
Hence, we   consider only the $80$ points determined  by taking $a_3=-1 $  and $\zeta_3$.

Since $\Ee_3(\KK_3(t))$ is of rank four   and   $\Ll_3[3] \subset \Ll_9$, so  changing $t$ with $t^3$ in  their coordinates 
gives us the following  points  in $\Ee_9(\KK_9(t))$
\begin{align}
	Q_1 &=\left( -\zeta_3^2 \, t^3, 1 \right), &		
	Q_3 &=\left( -( t^3 +  2^{\frac{2}{3}}) ,  -(2\zeta_3+1)
	(2^{\frac{1}{3}} t^3 +1) \right), \notag \\
	Q_2 &=\left(-t^3, 1 \right), &
	Q_4  &=  \left(-(t^3 +   \zeta_3^2\, 2^{\frac{2}{3}}), 
	(\zeta_3^2-1)\,2^{\frac{2}{3}} t^3 + 2 \zeta_{3}  +1\right).  \label{p1-4a}
\end{align}

In order to find  more six generators,  we write the fundamental polynomial   as 
$\Phi(u)= \Phi^{'}(u) \cdot \Phi^{''}(u)$,  where $\Phi^{''}(u)$ is a polynomial of degree $160$ and 
$\Phi^{'}(u) = \Phi_{0}(u)   \cdot \Phi_{1}(u)  \cdot \Phi_{2}(u)  \cdot \Phi_{3}(u) \cdot \Phi_{4}(u) $  with

	\begin{align*}
		\Phi_{0}(u) & = u^2 -1,\  \  \ 
		\Phi_{1}(u)   =  3 u^{6}+3 u^{4}-3 u^{2}+1, \\
		\Phi_{2}(u) & = 243 u^{18}+729 u^{16}+972 u^{14}+648 u^{12}+810 u^{10}+486 u^{8}-72 u^{6}+36 u^{4}-9 u^{2}+1,\\
		\Phi_{3}(u) & = 19683 u^{27}+177147 u^{26}+767637 u^{25}+2145447 u^{24}+4369626 u^{23}+6889050 u^{22}\\& +8581788 u^{21}+8345592 u^{20}
		  +6016437 u^{19}+2740311 u^{18}+216513 u^{17}-614547 u^{16}\\
		  &-198288 u^{15}+349920 u^{14}+419904 u^{13}+180792 u^{12}
		 -8019 u^{11}-43983 u^{10}\\
		 & -14661 u^{9}+5589 u^{8}+7614 u^{7}+4158 u^{6}+1620 u^{5}+432 u^{4}+27 u^{3}-27 u^{2}-9 u -1,\\
		\Phi_{4}(u) & = 19683 u^{27}-177147 u^{26}+767637 u^{25}-2145447 u^{24}+4369626 u^{23}-6889050 u^{22}\\ 
		&+8581788 u^{21}-8345592 u^{20}
		  +6016437 u^{19}-2740311 u^{18}+216513 u^{17}+614547 u^{16}\\ 
		  &-198288 u^{15}-349920 u^{14}+419904 u^{13}-180792 u^{12}
		 -8019 u^{11}+43983 u^{10}\\
		 &-14661 u^{9}-5589 u^{8}+7614 u^{7}-4158 u^{6}+1620 u^{5}-432 u^{4}+27 u^{3}+27 u^{2}-9 u +1.
\end{align*} 

We notice that the point $Q_1$ can be obtained by letting $u= \zeta_3^2$, a root of $u^2+u+1$ which divides the  factor $\Phi_2(u)$ of fundamental polynomial,  then solving  the equations \ref{eq9-1}   and changing  $t$ with $t+1$. Similarly,  $u=1$  give us $Q_2$ in   above list. The   factor of $\Phi_{1}(u)$    can be factored over $ \Q(\zeta_{12}, 2^{\frac{1}{3}})$ as
$$\Phi_{1}(u)=3 (u-v_{11})(u+v_{11})(u-v_{12})(u+v_{12})(u-v_{13})(u+v_{13}), \ \text{where}$$
$$v_{11}=\frac{\I  \sqrt{3}}{3}\, \left(2^{\frac{1}{3}}+1\right), \, 
v_{12}= \frac{\zeta_{12} \sqrt{3}}{3}( 2^{\frac{1}{3}} + \zeta_{3}^2), \,
v_{13}= \frac{\I\sqrt{3}}{3}\left(  \zeta_{3}^2 2^{\frac{1}{3}} +1\right).
$$
Taking $u_3=v_{11}$ and $u_4=v_{12}$ give us the points $Q_3$ and $Q_4$ as above.

In what follows, we will use the roots of  other factors of $\Phi^{'}(u)$ to complete the list of ten independent generators of  $\Ee_9(\KK_9(t))$, where  $\KK_9$ is an extension of $ \Q(\zeta_{12}, 2^{\frac{1}{3}})$ that all factors of
   $\Phi(u)$ decompose completely.
Using Maple, see  \cite[deg18]{Shioda-Codes}, we find out that the factor $\Phi_{2}(u)$ can be decomposed as
{\small 
	\begin{align*}
		\Phi_{2}(u) &= 243 \cdot \prod_{i=1}^{3}\prod_{j=0}^{1} (u-v_{ij}) \times \prod_{i=1}^{3}\prod_{j=0}^{1} (u-v'_{ij}) \times 
		\prod_{i=1}^{3}\prod_{j=0}^{1} (u-v^{''}_{ij}) ,\ \text{where} 
\end{align*}
}
$$
\begin{aligned}
	v_{ij}&=  (-1)^j \frac{ \I \sqrt{3}}{3}\left(  \zeta_{3}^i \left(  2^{\frac{1}{3}} ( 2^{\frac{1}{3}} -1) \right)^{\frac{1}{3}} + 1  \right),  \\
	v^{'}_{ij}&=\frac{\sqrt{3}}{3}\left(\zeta_3^i \left( (-1)^j \zeta_{12} 2^{\frac{1}{3}} ( 2^{\frac{1}{3}} +\zeta_{12}^{10}) \right)^{\frac{1}{3}} + \I \, (-1)^j  \right),\\
		v^{''}_{ij}&=\frac{\sqrt{3}}{3}\left(\zeta_3^i \left( (-1)^j \zeta_{12} 2^{\frac{1}{3}} (\zeta_{12}^{10} 2^{\frac{1}{3}} +1) \right)^{\frac{1}{3}} +\I \, (-1)^{j+1} \right).
\end{aligned}
$$

Now, we consider the following roots of $\Phi_{2}(u)$:

\begin{equation}
	\label{u5-9}
\begin{aligned}
u_5:=v_{00} & =   \frac{ \I \sqrt{3}}{3}\left( \left(  2^{\frac{1}{3}} ( 2^{\frac{1}{3}} -1) \right)^{\frac{1}{3}} + 1  \right),\\
u_6:=v_{10} & = \frac{ \I \sqrt{3}}{3}\left(  \zeta_{3}  \left(  2^{\frac{1}{3}} ( 2^{\frac{1}{3}} -1) \right)^{\frac{1}{3}} + 1  \right),\\
u_7:=v^{'}_{00} & =\frac{\sqrt{3}}{3}\left(  \left(   \zeta_{12} 2^{\frac{1}{3}} ( 2^{\frac{1}{3}} +\zeta_{12}^{10}) \right)^{\frac{1}{3}} + \I  \right),\\
u_8:=v^{''}_{01} & =  \frac{\sqrt{3}}{3}\left(\left( -\zeta_{12} 2^{\frac{1}{3}} (\zeta_{12}^{10} 2^{\frac{1}{3}} +1) \right)^{\frac{1}{3}} +\I  \right),\\
u_9:=v^{''}_{11} & = \frac{\sqrt{3}}{3}\left(\zeta_3 \left(- \zeta_{12} 2^{\frac{1}{3}} (\zeta_{12}^{10} 2^{\frac{1}{3}} +1) \right)^{\frac{1}{3}} +\I   \right).\\
\end{aligned}
\end{equation}
Solving  the equations \ref{eq9-1} for these $u_i$'s and changing $t$ with $t=1$,   one obtains  five points $Q_i'$s for $i=5, \cdots, 9$ 
as given in the end of this section.

In order to find the tenth independent point, for $\ell=0, 1, 2$, we define
$w_\ell:=2^{\frac{1}{3}} 3^{\frac{2}{3}} (\zeta_3^\ell\, 2^{\frac{1}{3}} + \zeta_6)^{\frac{1}{3}}.$
Then, $\Phi_{3}(u)$  can be factored over 
$\Q(  \zeta_{12}, w_0^{\frac{1}{3}} )= 
\Q(  \zeta_{12},  6^{\frac{1}{9}}\,( (2^{\frac{1}{3}} -1))^{\frac{1}{9}}  )$ in the following form:
$$
		\Phi_{3} (u) =\displaystyle \prod_{\ell=0}^{2}  \prod_{i=0}^{2} 	\prod_{j=0}^{2} 	
\left[ u- \frac{1}{3}	\left(3^{\frac{1}{3}} \zeta_3^i  \left(   \frac{  \zeta_3^j w_\ell^{2}+2^{\frac{4}{3}} \zeta_3^{\ell} w_\ell+
	3 \zeta_3^{3-\ell-j} (2^{\frac{2}{3}}+2 \zeta_3^\ell)}{w_\ell}\right)^{\frac{1}{3} }-\frac{\zeta_3^\ell 2^{\frac{1}{3}} +1}{3}\right)  \right].\\
%
$$
We notice that the factor $\Phi_{4}(u)$ also can be factored over $\KK_9$ as follows:
$$
	\Phi_{4} (u) =\displaystyle \prod_{\ell=0}^{2}  \prod_{i=0}^{2} 	\prod_{j=0}^{2} 	
	\left[ u+ \frac{1}{3}	\left(3^{\frac{1}{3}}\zeta_3^i  \left(   \frac{  \zeta_3^j w_\ell^{2}+ 2^{\frac{4}{3}}  \zeta_3^{\ell} w_\ell+
		3 \zeta_3^{3-\ell-j} (2^{\frac{2}{3}}+2 \zeta_3^\ell)}{w_\ell}\right)^{\frac{1}{3} }-\frac{\zeta_3^\ell 2^{\frac{1}{3}}+1}{3}\right)  \right].\\
$$
We refer the reader to see \cite[deg27-1, deg27-2 ]{Shioda-Codes} for the above factorization.

Solving  the equations \ref{eq9-1} using $u_{10}$ defined as 
\begin{equation}
	\label{u10}
	u_{10}:=\frac{3^{\frac{1}{3}} }{3} \left(\frac{ w_0^{2}+   2^{\frac{4}{3}} w_0+ 3 (2^{\frac{2}{3}}+2)}{w_0}\right)^{\frac{1}{3}}-\frac{2^{\frac{1}{3}}+1}{3},
\end{equation}
and changing $t$ with $t+1$, one obtains  tenth point $ Q_{10}$. 

For the points $Q_j$'s as above we have,   $\left\langle Q_i, Q_i\right\rangle=3 $ for  $i=1,\cdots,8$, and  letting $x_{ij}=x(Q_i)-x(Q_j)$ and $ y_{ij}=y(Q_i)-y(Q_j)$ for
 $ 1\leq  i \neq  j \leq 10$,  the pairing  $\left\langle Q_i, Q_j\right\rangle  $   can be calculated by 
 \begin{align*}  
 	\left\langle Q_i, Q_j\right\rangle  & =2 - 
 	\left\lbrace  \deg \left( \gcd  (x_{ij} , y_{ij})  \right)    
 	+ \min\left\lbrace 3- \deg(x_{ij}), 4- \deg(y_{ij}))\right\rbrace \right\rbrace.
 \end{align*} 
 Hence,	the     Gram matrix of $Q_1, \cdots, Q_{10}$ is equal to
\begin{equation}\label{mat9}
	\M_9=
\begin{pmatrix} 
	3 & -\frac{3}{2} & -\frac{3}{2} & \frac{3}{2} & -\frac{3}{2} & -\frac{3}{2} & -\frac{3}{2} & -\frac{3}{2} & -\frac{3}{2} & \frac{1}{2} 
	\\ \noalign{\medskip}
	-\frac{3}{2} & 3 & 0 & 0 & 0 & 0 & 0 & 0 & 0 & -1 
	\\ \noalign{\medskip}
	-\frac{3}{2} & 0 & 3 & 0 & 1 & 1 & 1 & 1 & 1 & 0 
	\\ \noalign{\medskip}
	\frac{3}{2} & 0 & 0 & 3 & -1 & -1 & -1 & -1 & -1 & -1 
	\\ \noalign{\medskip}
	-\frac{3}{2} & 0 & 1 & -1 & 3 & 0 & 1 & 1 & 1 & 1 
	\\ \noalign{\medskip}
	-\frac{3}{2} & 0 & 1 & -1 & 0 & 3 & 1 & 1 & 1 & -1 
	\\ \noalign{\medskip}
	-\frac{3}{2} & 0 & 1 & -1 & 1 & 1 & 3 & 1 & 1 & 1 
	\\ \noalign{\medskip}
	-\frac{3}{2} & 0 & 1 & -1 & 1 & 1 & 1 & 3 & 0 & 0 
	\\ \noalign{\medskip}
	-\frac{3}{2} & 0 & 1 & -1 & 1 & 1 & 1 & 0 & 3 & -1 
	\\ \noalign{\medskip}
	\frac{1}{2} & -1 & 0 & -1 & 1 & -1 & 1 & 0 & -1 & 3 
\end{pmatrix}  
\end{equation}
which has determinant $3^5/4$ as desired. We cite 
\cite[points-9]{Shioda-Codes}
to see the list of points  and details of computation of the  above Gram matrix.

\section{Proof of Theorem~\ref{main-g}} 
\label{case10}
By the Table~\ref{tabusui}, the  lattice $\Ll_{10}$   is isomorphic to   $\Ll_{2}[5] \oplus \Ll_{5}[2]$, i.e., 
$$\Ee_{10}(\CC(t))=\Ee_2(\CC(t^5)) \oplus \Ee_5(\CC(t^2)) \iso \Ee_2(\KK_2 (t^5)) \oplus \Ee_5(\KK_5 (t^2))$$ and hence
$\Ee_{10}(\KK_{10}(t))$ is  of rank $r_{10}=10$ where  
$\KK_{10} =  \KK_{5},$
and a set of 
independent generators includes  two points 
$$Q_1=(-1, t^5), \ \  Q_2= \left(  -\zeta_3 \, \left( 1+\frac{ 4}{3} t^5\right) , 
( 2\zeta_3 +1 )\, t^5\, \left( 1 + \frac{ 8}{9} t^{10}\right) \right),$$  
coming from Theorem~\ref{main-a} by changing $t$ with $t^5$,    and 
 more eight points 
i.e.,
$$Q_{j+2}=\left(\frac{t^4 + a_j t^2 +b_j}{u_j^2}, \, \frac{t^6 +c_j t^4 + d_j t^2 +e_j}{u_j^3}\right), $$
for $\ j=1, \ldots, 8$, obtained by changing $t$ with $t^2$ from those given   in Theorem~\ref{main-d}.
 The Gram matrix $\M_{10}$ of these ten points is a diagonal blocked with $5 \M_2$ and $2 \M_5$ in its diagonal blocks, i.e.,

\begin{equation}\label{mat10}
\M_{10}=
	\left(\begin{array}{@{}c|c@{}}
		5 \M_2
		& \bigzero \\ \noalign{\medskip}
		\hline
		\bigzero &
		2 \M_5 
	\end{array}\right),
\end{equation}
and its determinant is $\det(\M_{10})= 5^2 2^8 /3$ as desired.
 Therefore, the proof of Theorem~\ref{main-g} is  completed.

\section{Proof of Theorem~\ref{main-h}} 
\label{case12}

In order to treat this case, we consider the elliptic $K3$ surface  $\Fc_{6}: y^2=x^3+t^{6}+1/t^{6}$. 
In \cite[Theorem~1.5]{Salami2022}, we have determined the splitting field $\KK$ and the generators of the Mordell--Weil lattice $\Fc_{6}(\CC(t))$. Indeed, we proved that
it is isomorphic to $\Fc_{6}(\KK(t)) $   and has rank  $r_6=16$, where
	$\KK$ is a number field with a defining minimal  polynomial $g_6(x)$ of degree 96
given in \cite[min-pols]{k3-codes}, and see also	\cite[minpols]{Shioda-Codes}.
 A set of independent generators includes the points
 $P'_j=(x_j(t), y_j(t) ) $ for $j=1,\cdots, 8$ with
\begin{align*}
	x'_j(t)& = \frac{a_{j,0} t^{4}+a_{j,1} t^{3}+a_{j,2} t^{2}+ a_{j,1} t +a_{j,0}}{t^2}, \\
	y'_j(t) &=\frac{b_{j,0} t^{6}+b_{j,1} t^{5}+b_{j,2} t^{4}+b_{j,3} t^{3}+b_{j,2} t^{2}+b_{j,1} t +b_{j,0}}{t^3},
	\end{align*}
%
	and   $P'_{j+8}= \phi_6(P'_j) (x'_{j+8}(t), y'_{j+8}(t))$  	for $j=1, \ldots, 8$,
 with coordinates
	$$\begin{aligned}[b]
		x'_{j+8}(t)& = \frac{a_{{j+8},4} t^{4}+a_{{j+8},3} t^{3}+a_{{j+8},2} t^{2}+ a_{{j+8},1} t +a_{{j+8},0}}{\zeta_{12}^2  t^2}, \\
		y'_{j+8}(t) &=\frac{b_{{j+8},6}t^{6}+b_{{j+8},5} t^{5}+b_{{j+8},4} t^{4}+b_{{j+8},3} t^{3}+b_{{j+8},2} t^{2}+b_{{j+8},1} t +b_{{j+8},0}}{\zeta_{12}^3  t^3},
	\end{aligned}$$
	are given in \cite[Points-6]{k3-codes}.
Since any  point  $\left(x'(t), y'(t) \right)$ on $\Fc_{6}: y^2=x^3+t^{6}+1/t^{6}$ can be transformed to  $(\zeta_{12}^{2}t^2 \, x'(t), \zeta_{12}^{3}t^3\, y'(t))$ on $\Ee_{12}: y^2=x^3+t^{12}+1$, 
 the  lattice $\Ee_{12}(\CC(t))$ is isomorphic to $\Fc_{12}(\CC(t))$, which implies  
$\KK_{12}=   \KK$. 
Let $P_j=\left(x(t), y(t) \right) =(\zeta_{12}^{2}t^2 \, x'(t), \zeta_{12}^{3}t^3\, y'(t))$ denotes the image of $P'_j=\left(x'(t), y'(t) \right)$ 
under this isomorphism. We provided  the 16 point on the elliptic surface $\Ee_{12}$ in 
\cite[Points-12]{Shioda-Codes}.

	Having  polynomial coordinates in $\KK_{12}[t]$, the  $P_j$'s have no intersection with zero sections of $\Ee_{12}$, we get that
$\langle P_j, P_j \rangle = 4$, and 
$\langle P_{j_1}, P_{j_2}\rangle = 2- (P_{j_1} \cdot P_{j_2})$, where
   for any $1\leq j_1 \neq j_2 \leq 16$, 
$(P_{j_1} \cdot P_{j_2})$ is the the intersection number given by
\begin{equation}
	( P_{j_1} \cdot P_{j_2})
	=\deg(\gcd (x_{j_1}- x_{j_2} , y_{j_1}- y_{j_2})) +
	\min \{4- \deg( x_{j_1}- x_{j_2}) , 6- \deg( y_{j_1}- y_{j_2})\}.
\end{equation}	
Thus,  we obtain the Gram matrix of    $P_1, \cdots, P_{16}$'s   as
{\small  
\begin{equation}\label{mat12}
	\setcounter{MaxMatrixCols}{20} 
	\M_{12}=
	\begin{pmatrix}
		4 & 2 & 0 & 0 & 0 & 2 & -1 & 2 & 0 & 0 & 0 & 0 & 0 & 0 & 0 & 0 \\ \noalign{\medskip}
		2 & 4 & 1 & 1 & 1 & 1 & -2 & 1 & 0 & 0 & 0 & 0 & 0 & 0 & 0 & 0 \\ \noalign{\medskip}
		0 & 1 & 4 & 0 & 0 & -2 & 0 & 0 & 0 & 0 & -1 & 1 & 0 & 0 & 0 & 0 \\ \noalign{\medskip}
		0 & 1 & 0 & 4 & -2 & 0 & -2 & 1 & 1 & 0 & 0 & 0 & 0 & 0 & 0 & 0 \\ \noalign{\medskip}
		0 & 1 & 0 & -2 & 4 & 0 & 1 & -2 & -1 & 0 & 0 & 0 & 0 & 0 & 0 & 0 \\ \noalign{\medskip}
		2 & 1 & -2 & 0 & 0 & 4 & 0 & 0 & 0 & 0 & 0 & -1 & 0 & 0 & 0 & 0 \\ \noalign{\medskip}
		-1 & -2 & 0 & -2 & 1 & 0 & 4 & -2 & -1 & 1 & 0 & 0 & 0 & 0 & 0 & 0 \\ \noalign{\medskip}
		2 & 1 & 0 & 1 & -2 & 0 & -2 & 4 & 0 & -1 & 0 & 0 & 0 & 0 & 0 & 0 \\ \noalign{\medskip}
		0 & 0 & 0 & 1 & -1 & 0 & -1 & 0 & 4 & -2 & 0 & 0 & 0 & 2 & 0 & 2 \\ \noalign{\medskip}
		0 & 0 & 0 & 0 & 1 & 0 & 1 & -1 & -2 & 4 & 0 & 0 & -2 & 0 & -2 & 0 \\ \noalign{\medskip}
		0 & 0 & -1 & 0 & 0 & 0 & 0 & 0 & 0 & 0 & 4 & -2 & 0 & 2 & 0 & -2 \\ \noalign{\medskip}
		0 & 0 & 1 & 0 & 0 & -1 & 0 & 0 & 0 & 0 & -2 & 4 & -2 & 0 & 2 & 0 \\ \noalign{\medskip}
		0 & 0 & 0 & 0 & 0 & 0 & 0 & 0 & 0 & -2 & 0 & -2 & 4 & -2 & 0 & 0 \\ \noalign{\medskip}
		0 & 0 & 0 & 0 & 0 & 0 & 0 & 0 & 2 & 0 & 2 & 0 & -2 & 4 & 0 & 0 \\ \noalign{\medskip}
		0 & 0 & 0 & 0 & 0 & 0 & 0 & 0 & 0 & -2 & 0 & 2 & 0 & 0 & 4 & -2 \\ \noalign{\medskip}
		0 & 0 & 0 & 0 & 0 & 0 & 0 & 0 & 2 & 0 & -2 & 0 & 0 & 0 & -2 & 4
	\end{pmatrix}
\end{equation}}

We notice that the     determinant of $\M_{12}$ is  $2^4 3^4$ as desired.
 Therefore, the   points
$P_j$'s for $j=1, \ldots, 16$ form a set of independent generators of $\Ee_{12} $	over $\KK_{12}(t)$.


\bibliographystyle{amsplain} 

\bibliography{SBIB}{} 

\end{document}